\newtheorem{theorem}{Theorem}
\newtheorem{proposition}{Proposition}
\newtheorem{definition}{Definition}
\newcommand{\R}{{\mathbb R}}
\newcommand{\set}[2]{ \left\{ #1 \ \left| \ #2 \right. \right\} }
\title{Uniform geometric estimates for sublevel sets}
\author{Philip T. Gressman}
\begin{document}
\maketitle

\begin{abstract}
This paper reconsiders the uniform sublevel set estimates of Carbery, Christ, and Wright \cite{ccw1999} and Phong, Stein, and Sturm \cite{pss2001} from a geometric perspective.  This perspective leads one to consider a natural collection of homogeneous, nonlinear differential operators which generalize mixed derivatives in $\R^d$.  As a consequence, it is shown that, in the case of both of these previous works, improved uniform decay rates are possible in many situations.
\end{abstract}

\section{Introduction}

Many of the most basic questions in harmonic analysis are concerned with understanding one or more geometric invariants of a function $F(x)$ defined on some neighborhood in $\R^d$ (two of the most common being the growth rate of the measure of its sublevel sets or the decay rate of the scalar oscillatory integral having this function as the phase).  There has been considerable interest in understanding the extent to which the Newton polyhedron (and associated Newton distance) of $F$ determines or fails to determine the sharp values of these invariants.  Var\v{c}enko  \cite{varcenko1976}, Phong and Stein \cite{ps1997}, Phong, Stein, and Sturm \cite{pss2001}, Ikromov, Kempe, and M\"{u}ller \cite{ikm2007}, and Magyar \cite{magyar2009} are just a few of the many examples which have arisen over the last thirty years.  Closely related to these questions are issues of stability and the extent to which these invariants are left unchanged after some perturbation.  In many cases it is necessary to impose some sort of ``smallness'' condition on the perturbation; see Karpushkin \cite{karpushkin1986}, Phong, Stein, Sturm \cite{pss1999}, Phong, Sturm \cite{ps2000}, Greenblatt \cite{greenblatt2005mrl}, and Seeger \cite{seeger1998} for examples of this sort.  

The last decade has also witnessed the emergence of a different sort of stability estimate, in which uniform bounds for the relevant invariants are deduced given only lower bounds on certain derivatives of $F$ (so in particular, there are no ``smallness'' assumptions).  The two most influential works in this direction are due to to Carbery, Christ, and Wright \cite{ccw1999} and Phong, Stein, and Sturm \cite{pss2001}.  In the former paper, stability estimates for sublevel sets and oscillatory integrals for smooth $F$ are established; it turns out that these estimates depend only on the minimum value of $|\beta|$ as $\beta$ ranges over all multiindices $\beta$ with $|\partial^\beta F| \geq 1$ on the unit box, meaning that there is no improvement of the index when many derivatives are simultaneously nonzero in addition to the particular derivative of minimal order.  Examples of the form $F = p \circ \ell$ (where $p$ is a polynomial in one variable and $\ell$ is linear) demonstrate that, in at least some cases, the dependence of the invariants on only one multiindex is genuine and not an artifact of the method of proof (since in this case, the sharp estimates for the one-dimensional function $p$ continue to be sharp for $p \circ \ell$, but many mixed derivatives will be simultaneously nonzero for generic $\ell$).
The paper of Phong, Stein, and Sturm \cite{pss2001}, on the other hand,  establishes uniform estimates for the norm decay rates of sublevel-set functionals and does not encounter this sort of limitation; in particular, their main results may be formulated in terms of a Newton distance (or restricted Newton distance) associated to $F$ (and so, in particular, there are generally improved estimates when many derivatives are simultaneously nonzero).  The only downside is that these estimates are only guaranteed to be sharp on products $L^{p_1}(\R) \times \cdots \times L^{p_d}(\R)$ in the range $d-1 \leq \sum_{j=1}^d \frac{1}{p_j} \leq d$.

The purpose of the present paper is to demonstrate that, in both the scalar case of Carbery, Christ, and Wright \cite{ccw1999} and the multilinear case of Phong, Stein, and Sturm \cite{pss2001}, there are additional geometric mechanisms which allow for improved estimates in the presence of multiple nonvanishing derivatives.  The key is to require nonvanishing of certain nonlinear expressions involving derivatives of $F$ rather than to require nonvanishing of those same derivatives in independent ways.  These nonlinear expressions arise naturally when considering sublevel and oscillatory integral problems from a geometrically invariant viewpoint.  In the scalar case, they allow for sharper estimates whenever $F$ is not of the exceptional form already mentioned (that is, some one-dimensional function composed with a linear mapping).  In the multilinear case, they provide improved results outside the range $d-1 \leq \sum_{j=1}^d \frac{1}{p_j} \leq d$.  In neither, case, though, do the estimates correspond in an elementary way to a Newton distance.

The class of differential operators relevant to Euclidean sublevel set and oscillatory integral problems may be defined in the following way:
\begin{definition}
A differential operator on functions in $\R^d$ will be called admissible of type $(\alpha,\beta)$ for some positive integer $\alpha$ and some multiindex $\beta$ of length $d$ provided that it lies in the class ${\cal L}^{\alpha,\beta}$ described inductively by:
\begin{enumerate}
\item Let ${\cal L}^{1,(0,\ldots,0)}$ be the set containing only the identity operator on smooth functions in $\R^d$ (i.e., $L F = F$ for all smooth $F$ and $L \in {\cal L}^{1,(0,\ldots,0)}$).
\item For $(\alpha,\beta) \neq (1,(0,\ldots,0))$, ${\cal L}^{\alpha,\beta}$ consists of all operators $L$ given by
\begin{equation} L F := \det \left[ \begin{array}{ccc}
\frac{\partial}{\partial x_{i_1}} L_1 F & \cdots & \frac{\partial}{\partial x_{i_1}} L_n F \\
 \vdots & \ddots & \vdots \\
\frac{\partial}{\partial x_{i_n}} L_1 F & \cdots & \frac{\partial}{\partial x_{i_n}} L_n F
\end{array} \right] \label{opind}
\end{equation}
for some $L_1 \in {\cal L}^{\alpha_1,\beta_1},\ldots, L_n \in {\cal L}^{\alpha_n,\beta_n}$ satisfying $\alpha = \alpha_1 + \cdots + \alpha_n$ and $\beta = \beta_1 + \cdots + \beta_n + I$, where $I$ is the multiindex $(I_1,\ldots,I_d)$ for which $I_{i_1} = \cdots = I_{i_n} = 1$ and $I_j = 0$ for all remaining indices $j$.
\end{enumerate}
\end{definition}
Observe that the admissible operators of type $(1,\beta)$ are exactly the mixed partial derivative operators $\pm \partial^{\beta}$. For $\alpha > 1$, ${\cal L}^{\alpha,\beta}$ will consist of many distinct operators (none of which are linear); for example, both of the following operators belong to ${\cal L}^{2,(2,2)}$:
\[ F \mapsto \frac{\partial^2 F}{\partial x^2} \frac{\partial^2 F}{\partial y^2} - \left(\frac{\partial^2 F}{\partial x \partial y} \right)^2 \ \mbox{  and  }  F \mapsto \ \frac{\partial F}{\partial x} \frac{\partial^3 F}{\partial x \partial y^2} - \frac{\partial F}{\partial y} \frac{\partial^3 F}{\partial x^2 \partial y}. \]
The Euclidean versions of the main results of this paper may now be stated in terms of the classes ${\cal L}^{\alpha,\beta}$:  
\begin{theorem}[cf. Theorem 1.3 of \cite{ccw1999}]
Suppose $F$ is a real-analytic function on some connected, open $U \subset \R^d$ containing $[0,1]^d$.  Fix any closed set $D \subset [0,1]^d$.  If $L$ is an admissible differential operator of type $(\alpha,\beta)$ and $\epsilon > 0$, then
\[  \left| \set{ x \in D}{ |F(x)| \leq \epsilon} \right| \leq C \epsilon^{\frac{\alpha}{|\beta|+1-\alpha}} \left( \inf_{x \in D} |L F(x)| \right)^{-\frac{1}{|\beta|+1-\alpha}}  \]
for some constant $C$ independent of $\epsilon$.  If $F$ is a Pfaffian function on $U$, then the constant $C$ depends only on $d$, $L$, and the format of $F$. \label{ccwthm1}
\end{theorem}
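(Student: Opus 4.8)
The plan is to prove the following restatement, which is the content of the theorem once degenerate cases are peeled off: for every \emph{non-identity} admissible operator $L$ of type $(\alpha,\beta)$, writing $m:=|\beta|+1-\alpha$, for every closed $D\subset[0,1]^d$ and every $\epsilon>0$ with $\lambda:=\inf_{x\in D}|LF(x)|\in(0,\infty)$ and $D\neq\emptyset$, one has $|\{x\in D:|F(x)|\le\epsilon\}|\le C\epsilon^{\alpha/m}\lambda^{-1/m}$. (If $D=\emptyset$ or $\lambda=0$ the inequality is trivial, and we may assume $C\epsilon^{\alpha/m}\lambda^{-1/m}\le 1$, since otherwise $|D|\le 1$ already suffices.) A short induction on $|\beta|$, using $\alpha=\sum_b\alpha_b$ and $|\beta|=\sum_b|\beta_b|+n$ in \eqref{opind}, shows $\alpha\le|\beta|$ for non-identity $L$, so $m$ is a positive integer; the same bookkeeping gives the sharper identity $m=1+\sum_{b=1}^n m_b$, where $m_b:=|\beta_b|+1-\alpha_b\ge 0$ with $m_b=0$ precisely when $L_b$ is the identity, and where $|\beta_b|<|\beta|$ for every $b$. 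The moral of the proof is that the nonlinearity of $L$ is exactly the statement that $LF$ is a Jacobian determinant of the map $(L_1F,\dots,L_nF)$ along the coordinate $n$-plane spanned by $e_{i_1},\dots,e_{i_n}$; this is what lets real-analyticity (or a Pfaffian format) enter, through a sheet-counting bound, and what yields an improvement over the mixed-derivative case.

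The proof is induction on $|\beta|$. For the base case $|\beta|=1$, i.e.\ $L=\pm\partial_j$: on each coordinate line $F$ is strictly monotone off the zero set of $\partial_j F$, which is finite in the real-analytic case and of format-bounded cardinality in the Pfaffian case (Khovanskii's theorem on zeros of a Pfaffian function of one variable), so the change of variables $t\mapsto F$ bounds the length of $\{|F|\le\epsilon\}\cap D$ on that line by $C\epsilon\lambda^{-1}$, and Fubini completes it. For the inductive step $|\beta|\ge 2$ write $LF=\det[\partial_{x_{i_a}}L_bF]_{a,b=1}^n$ as in \eqref{opind} and introduce parameters $\delta_1,\dots,\delta_n>0$ (fixed below). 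Cover $\{|F|\le\epsilon\}\cap D$ by $E:=\{|F|\le\epsilon\}\cap D\cap\bigcap_b\{|L_bF|<\delta_b\}$ together with the sets $E_b:=\{|F|\le\epsilon\}\cap D\cap\{|L_bF|\ge\delta_b\}$. If $L_b$ is the identity we take $\delta_b:=2\epsilon$, so $E_b=\emptyset$; otherwise $|\beta_b|<|\beta|$ and the inductive hypothesis, applied to $L_b$ on the closed set $D\cap\{|L_bF|\ge\delta_b\}$, gives $|E_b|\le C_b\,\epsilon^{\alpha_b/m_b}\delta_b^{-1/m_b}$.

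The geometric heart is the bound on $|E|$. Slice $\R^d$ into the $n$-planes $P$ parallel to $\mathrm{span}(e_{i_1},\dots,e_{i_n})$; on each such plane the real-analytic map $\Phi:=(L_1F,\dots,L_nF)$, regarded as a function of the $n$ variables $x_{i_1},\dots,x_{i_n}$, has Jacobian determinant exactly $LF$, hence of modulus $\ge\lambda$ on $D\cap P\supseteq E\cap P$. Since $\Phi(E)\subseteq\prod_b(-\delta_b,\delta_b)$, the area formula applied to $\Phi|_P$ on the measurable set $E\cap P$ gives $\lambda\,|E\cap P|\le\int_{E\cap P}|\det D\Phi|=\int_{\R^n}\#\big((E\cap P)\cap\Phi|_P^{-1}(y)\big)\,dy\le 2^n N\prod_b\delta_b$, where $N$ bounds, uniformly in $P$ and $y$, the number of solutions of $\Phi(x)=y$ lying in $D\cap P$ — all of which are nondegenerate because $\det D\Phi=LF$ does not vanish there. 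Integrating over the bounded family of planes yields $|E|\le 2^n N\prod_b\delta_b/\lambda$. Now choose $\delta_b:=\epsilon^{\alpha_b-m_b\alpha/m}\lambda^{m_b/m}$ for non-identity $L_b$ (which equals $\epsilon$ when $m_b=0$, consistent up to the harmless factor $2$). Using $\sum_b\alpha_b=\alpha$ and $\sum_b m_b=m-1$, a direct computation shows each of the displayed bounds becomes $\le C'\epsilon^{\alpha/m}\lambda^{-1/m}$, and summing the $O(n)$ contributions closes the induction. That some $\delta_b$ may exceed $\sup_D|L_bF|$ is harmless: the corresponding $E_b$ is then empty, while the bound on $|E|$ never used smallness of the $\delta_b$.

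The main obstacle is the uniform solution count $N$. In the real-analytic case its finiteness, uniform over the plane family and over $y$, follows from the uniform-finiteness property of families definable in the o-minimal structure generated by restricted analytic functions, so $C$ may depend on $F$; in the Pfaffian case the functions $L_1F,\dots,L_nF$ again form a Pfaffian system whose format is bounded in terms of $L$ and the format of $F$, and the Khovanskii-type bounds on the number of nondegenerate solutions of such a system give $N$, and hence the final $C$, depending only on $d$, $L$, and the format of $F$. Apart from this, the argument needs only routine care: measurability and applicability of the area formula, the degenerate subcase of identity factors $L_b$, the fact that $D$ need not respect the coordinate splitting (the inductive hypothesis and the slicing are stated for arbitrary closed $D$, so this causes no difficulty), and the bookkeeping when the optimizing parameters tend to infinity.
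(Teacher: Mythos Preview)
Your argument is correct and is essentially the paper's own proof, reorganized as a direct top-down induction on $|\beta|$ within the class $\mathcal{L}^{\alpha,\beta}$ rather than via the more general $d$-tree framework. The paper first embeds the problem in the setting of Theorem~\ref{maintheorem} by taking $\pi_j=x_j$ for $j\le d$, $\pi_{d+1}=F$, and $f_j=\chi_{[0,1]}$, $f_{d+1}=\chi_{[-\epsilon,\epsilon]}$; it then iterates Proposition~\ref{covprop} bottom-up over sub-trees of $G$, adding at each step a constraint $|\partial^{G'}\pi|\le \delta^{-\#G'}\prod_j|E_j|^{(G')^{(j)}}$ at the cost of $CN\delta$ in measure, until the constraint at the full tree $G$ is vacuous. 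Your slicing into $n$-planes and applying the area formula to $\Phi=(L_1F,\dots,L_nF)$ is exactly the same change-of-variables step---the paper simply keeps the remaining coordinate functions $x_{i_{n+1}},\dots,x_{i_d}$ explicitly in a full $d$-dimensional change of variables rather than slicing them out. Likewise, your inductive bound on the $E_b$'s corresponds to the paper's previously accumulated constraints on the sub-trees $G_b$, your choice of $\delta_b$ mirrors the paper's single-parameter choice of $\delta$ (the identities $\sum m_b=m-1$ and $\sum\alpha_b=\alpha$ are precisely the numerology in Section~\ref{reducesec}), and your appeal to o-minimality/Khovanski{\u\i} for the multiplicity bound $N$ matches the paper's use of Gabrielov and Khovanski{\u\i}.
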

\begin{theorem}[cf. Theorem 1.4 of \cite{ccw1999}]
Suppose $F$ is a real-analytic function on some connected, open $U \subset \R^d$ containing $[0,1]^d$.  Let $D \subset [0,1]^d$ be any semi-analytic set.  If $L$ is an admissible operator of type $(\alpha,\beta)$ and $\lambda \in \R$, then
\[ \left| \int_D e^{i \lambda F(x)} dx \right| \leq C |\lambda|^{-\frac{\alpha}{|\beta| + 1 - \alpha}} \left(   \inf_{x \in D} |L F(x)| \right)^{-\frac{1}{|\beta|+1-\alpha}} \]
for some constant $C$ independent of $\epsilon$.  If $F$ is Pfaffian on $U$ and $D$ is semi-Pfaffian, then $C$ depends only on $d$, $L$, and the formats of $F$ and $D$. \label{ccwthm2}
\end{theorem}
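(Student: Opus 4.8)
The plan is to deduce Theorem \ref{ccwthm2} from Theorem \ref{ccwthm1} by transferring the sublevel bound to the push-forward measure of $F$ and running a one-dimensional van der Corput argument adapted to the level-set decomposition of $F$. The first — and essentially free — point is that every admissible operator $L$ of type $(\alpha,\beta)$ with $(\alpha,\beta)\neq(1,(0,\ldots,0))$ annihilates additive constants: in \eqref{opind} each matrix entry is a first-order derivative of some $L_j F$, and by induction each $L_j$ changes only by a constant (or not at all) under $F\mapsto F+c$. Hence $L(F-t)=LF$ for all $t\in\R$, and applying Theorem \ref{ccwthm1} to $F-t$ on the closed set $\overline D$ (uniformly for $t\in F([0,1]^d)$, outside which the relevant sublevel sets are empty) gives, with $M:=\inf_{x\in D}|LF(x)|$ (equal to $\inf_{\overline D}|LF|$ by continuity of $LF$; we may assume $M>0$, else there is nothing to prove), $\delta:=\frac{\alpha}{|\beta|+1-\alpha}$, $\theta:=\frac{1}{|\beta|+1-\alpha}$, and $\nu:=F_*(\mathbf 1_D\,dx)$ the push-forward of Lebesgue measure on $D$,
\[ \nu\big([t-\epsilon,\,t+\epsilon]\big)\ \le\ C\,\epsilon^{\delta}\,M^{-\theta}\qquad\text{for all }t\in\R,\ \epsilon>0. \]
Since $\int_D e^{i\lambda F}\,dx=\int_{\R}e^{i\lambda t}\,d\nu(t)$, it remains to bound $\widehat\nu(\lambda)$. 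The uniform estimate already rules out atoms of $\nu$, and (for real-analytic, resp. Pfaffian, $F$) forces $\nabla F$ to vanish on at most a null subset of $D$, so by the coarea formula $\nu$ is absolutely continuous with density $f(t)=\int_{F^{-1}(t)\cap D}|\nabla F|^{-1}\,d\mathcal H^{d-1}$; moreover $\mathrm{supp}\,\nu\subset F([0,1]^d)$, an interval of some length $R<\infty$, and $\|\nu\|=|D|\le C R^{\delta}M^{-\theta}$ (take $\epsilon=R$).

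The crucial step — the one I expect to be the main obstacle — is to bound the number $N$ of local extrema of $f$. For real-analytic $F$ and semi-analytic $D$ this is finite: $t\mapsto\nu((-\infty,t])$ is a parametric volume of a subanalytic family, hence (by log-analyticity of subanalytic volumes) definable in an o-minimal structure, so its derivative $f$ has finitely many local extrema on the compact set $\mathrm{supp}\,\nu$. In the Pfaffian case the same conclusion must be made \emph{effective}: one must bound $N$ and $R$ explicitly in terms of $d$, $L$, and the formats of $F$ and $D$. This does not follow directly from the standard zero-counting theorems for Pfaffian functions, because $f$ is a parametric integral of a Pfaffian function and need not itself be Pfaffian; it requires effective bounds on the topology of semi-Pfaffian sets (Khovanskii, Gabrielov–Vorobjov) together with an argument placing $f$ in a class of effectively bounded oscillation. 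This is where the real work lies.

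Granting $N<\infty$ with the stated dependence, the estimate follows by a routine one-dimensional argument. Decompose $\mathrm{supp}\,\nu$ into the $\le N+1$ maximal intervals on which $f$ is monotone; for $\delta<1$ refine each according to the dyadic distance $2^{-j}\ge|\lambda|^{-1}$ to its nearest endpoint. On the set where this distance is $<|\lambda|^{-1}$, the sublevel bound controls the $\nu$-measure by $\lesssim N|\lambda|^{-\delta}M^{-\theta}$. On a dyadic piece at scale $2^{-j}$, the sublevel bound (passing from an averaged to a pointwise estimate via monotonicity) gives $|f|\lesssim M^{-\theta}2^{-j(\delta-1)}$, and a single integration by parts on this monotone piece contributes $\lesssim|\lambda|^{-1}M^{-\theta}2^{j(1-\delta)}$; summing the geometric series over $|\lambda|^{-1}\le 2^{-j}\le R$ — legitimate since $\delta\le 1$ for every admissible operator, by the same "at most one $L_j$ is the identity" observation used above — yields $\lesssim M^{-\theta}|\lambda|^{-\delta}$. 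When $\delta=1$ the density is bounded by $\lesssim M^{-\theta}$ and one integration by parts on each of the $\le N+1$ monotone intervals suffices. Finally the range $|\lambda|\lesssim R^{-1}$ is handled trivially by $|\int_D e^{i\lambda F}|\le|D|\lesssim R^{\delta}M^{-\theta}\lesssim M^{-\theta}|\lambda|^{-\delta}$. Collecting terms gives the bound with $C$ depending only on $d$, $L$, $R$, and $N$ — hence, in the Pfaffian and semi-Pfaffian case, only on $d$, $L$, and the formats of $F$ and $D$.
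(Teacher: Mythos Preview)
Your route is genuinely different from the paper's. The paper does \emph{not} pass through Theorem \ref{ccwthm1} at all; instead it derives Theorem \ref{ccwthm2} as the special case $m=d+1$, $\pi_j=x_j$ ($j\le d$), $\pi_{d+1}=F$ of the geometric oscillatory-integral Theorem \ref{oscthm}. That theorem is proved by a $d$-dimensional integration by parts: one writes down an explicit $(d-1)$-form whose exterior derivative reproduces $e^{i\lambda\pi_m}\omega$ up to controllable errors, applies Stokes, and bounds every resulting term by the change-of-variables Proposition \ref{covprop}. The only ``topological'' input is a bound on the number of nondegenerate solutions of systems $\partial^{G_1}\pi=c_1,\ldots,\partial^{G_d}\pi=c_d$ (and their restrictions to $\partial\mathcal M$), and in the Pfaffian case this comes \emph{directly} from Khovanski{\u\i}'s theorem, since all the functions involved are Pfaffian with formats controlled by those of the $\pi_j$.

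Your push-forward argument is attractive and, in the real-analytic statement, essentially complete: the invariance $L(F-t)=LF$, the bound $\nu([t-\epsilon,t+\epsilon])\le C\epsilon^{\delta}M^{-\theta}$, the inequality $\delta\le 1$, and the monotone/dyadic van der Corput step are all correct (and the constant does not in fact depend on $R$---your geometric series is dominated by its top term). The finiteness of the number $N$ of monotone pieces of $f$ follows, as you say, from o-minimality of subanalytic volumes; since in the real-analytic case $C$ is allowed to depend on $F$ and $D$, that suffices.

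The gap is exactly where you flag it: the Pfaffian uniformity. You need $N$ bounded \emph{effectively} in terms of the formats, but $f(t)=\int_{F^{-1}(t)\cap D}|\nabla F|^{-1}\,d\mathcal H^{d-1}$ is a parametric integral of Pfaffian data and is not itself Pfaffian, so Khovanski{\u\i}'s zero-counting does not apply to it. Effective oscillation bounds for such integrated quantities are a separate and substantially harder problem than the zero-counting the paper uses, and ``Granting $N<\infty$ with the stated dependence'' is precisely the step that remains unproved. The paper's Stokes-based argument sidesteps this entirely: it never forms a one-dimensional density, so every multiplicity bound it needs is for a genuine Pfaffian system, to which Khovanski{\u\i} applies verbatim. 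That is the real advantage of the $d$-dimensional approach over the reduction to one dimension.
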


\begin{theorem}[cf. Theorem A of \cite{pss2001}]
Suppose $F$ is a real-analytic function on some connected, open $U \subset \R^d$.  Fix any compact set $D \subset U$.  If $L$ is any admissible operator of type $(\alpha,\beta)$ and $\epsilon > 0$, then
\[
 \left| \int_D \chi_{|F(x)| \leq \epsilon} \prod_{i=1}^d f_i(x_i) dx \right| \leq C \epsilon^{\frac{\alpha}{|\beta|+1-\alpha}} \left( \inf_{x \in D} |L F(x)| \right)^{-\frac{1}{|\beta|+1-\alpha}} \prod_{i=1}^d ||f_i||_{L^{p_i,1}} \]
where $\frac{1}{p_i} = 1- \frac{\beta_i}{|\beta| + 1 - \alpha}$ ($L^{p,1}(\R)$ is the strong-type Lorentz space). The constant $C$ does not depend on $\epsilon$, and if $F$ is Pfaffian on $U$, then $C$ depends only on $d$, $L$, and the format of $F$ (in particular, $C$ does not depend on $D$ in this case). \label{pssthm2}
\end{theorem}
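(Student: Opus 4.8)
The plan is to pass from the Lorentz-space bound to an estimate for characteristic functions, then to prove the resulting purely geometric sublevel inequality by induction on $|\beta|$, the inductive step being driven by the determinant structure \eqref{opind} together with the area formula. Writing $\theta:=|\beta|+1-\alpha$, one checks by an elementary induction on the construction of admissible operators that $0\le\beta_i\le\theta$, so that $1\le p_i\le\infty$. The quantity on the left of the asserted inequality is a positive $d$-linear form in $(f_1,\dots,f_d)$; hence, by the standard layer-cake argument for positive multilinear forms (write $f_i=\int_0^\infty\chi_{\{f_i>t\}}\,dt$, interchange the form with the integrals by positivity, and use $\|\chi_E\|_{L^{p,1}}\approx|E|^{1/p}$), it suffices to prove, for all measurable $E_1,\dots,E_d\subset\R$ and with $\mu:=\inf_{x\in D}|LF(x)|$, that
\[
\left|\set{x\in D}{x_i\in E_i\ \text{for }1\le i\le d,\ |F(x)|\le\epsilon}\right|\le C\,\epsilon^{\alpha/\theta}\,\mu^{-1/\theta}\prod_{i=1}^d|E_i|^{1/p_i}.
\]
We may assume $\mu>0$, and after a partition of unity subordinate to a finite covering of $D$ by cubes in $U$ and an affine change of coordinates, we may assume $D$ lies in a single cube $Q=[0,1]^d$ contained in $U$, so in particular $|E_i|\le1$ (in the Pfaffian case the number of pieces and the formats produced are controlled by cell-decomposition theorems for Pfaffian sets, which keeps the dependence of the constant as claimed).

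Now induct on $|\beta|$. When $|\beta|=0$ the operator $L$ is the identity, $1/p_i=1$ for all $i$, and the inequality is trivial since $\{|F|\le\epsilon\}$ is empty for $\epsilon<\mu$. For the inductive step write $LF=\det[\partial_{x_{i_j}}L_kF]_{j,k=1}^n$ with $L_k\in{\cal L}^{\alpha_k,\beta_k}$, $\sum_k\alpha_k=\alpha$ and $\sum_k\beta_k+I=\beta$, where $I$ is the indicator of $S:=\{i_1,\dots,i_n\}$; set $\theta_k:=|\beta_k|+1-\alpha_k$. Since $|I|=n$ one has
\[
\sum_{k=1}^n|\beta_k|=|\beta|-n,\qquad \sum_{k=1}^n\theta_k=(|\beta|-n)+n-\alpha=\theta-1,
\]
so in particular $|\beta_k|<|\beta|$ and the inductive hypothesis applies to each $L_k$. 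Fix thresholds $\nu_1,\dots,\nu_n>0$ to be chosen, put $A:=\set{x\in D}{x_i\in E_i,\ |F(x)|\le\epsilon}$, and decompose $A=\bigl(\bigcup_{k=1}^nA^{(k)}\bigr)\cup A^0$, where $A^{(k)}:=\{x\in A:|L_kF(x)|>\nu_k\}$ and $A^0:=\{x\in A:|L_kF(x)|\le\nu_k\text{ for all }k\}$.

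If $L_k$ is the identity operator then $|L_kF|=|F|\le\epsilon$ on $A$, so $A^{(k)}=\varnothing$ as soon as $\nu_k\ge\epsilon$; otherwise $(\alpha_k,\beta_k)\ne(1,(0,\dots,0))$, and applying the inductive hypothesis to $L_k$ on the compact set $D\cap\{|L_kF|\ge\nu_k\}$ yields $|A^{(k)}|\le C\,\epsilon^{\alpha_k/\theta_k}\nu_k^{-1/\theta_k}\prod_i|E_i|^{1-\beta_{k,i}/\theta_k}$. For $A^0$, freeze the variables $x_{S^c}$ and regard $\Phi:=(L_1F,\dots,L_nF)$ as a map of $x_S$ alone; its Jacobian determinant has absolute value $|LF|\ge\mu$ on $D$, while $\Phi$ carries the $x_{S^c}$-slice of $A^0$ into the box $\prod_k[-\nu_k,\nu_k]$. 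The area formula gives
\[
\mu\,\bigl|A^0(x_{S^c})\bigr|\le\int_{A^0(x_{S^c})}|\det D_{x_S}\Phi|\,dx_S=\int_{\R^n}N_{x_{S^c}}(y)\,dy\le\Bigl(\sup_y N_{x_{S^c}}(y)\Bigr)\prod_{k=1}^n2\nu_k,
\]
where $N_{x_{S^c}}(y)$ counts the $x_S$ in that slice with $\Phi(x_S)=y$. Combining this with the trivial bound $|A^0(x_{S^c})|\le\prod_j|E_{i_j}|$, interpolating between the two, and integrating over $x_{S^c}\in\prod_{i\in S^c}E_i$ (using $|E_i|\le1$) controls $|A^0|$. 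One then takes $\nu_k=\epsilon$ for each identity factor and $\nu_k=\epsilon^{\alpha_k-\theta_k\alpha/\theta}\mu^{\theta_k/\theta}$ times an appropriate monomial in the $|E_i|$ for each remaining factor; the relation $\sum_k\theta_k=\theta-1$ makes $\mu^{-1}\prod_k\nu_k$ and each $\epsilon^{\alpha_k/\theta_k}\nu_k^{-1/\theta_k}$ collapse to $\epsilon^{\alpha/\theta}\mu^{-1/\theta}$ up to that monomial, so all three families of terms are at most $C\,\epsilon^{\alpha/\theta}\mu^{-1/\theta}\prod_i|E_i|^{1/p_i}$, completing the induction.

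The main obstacle is the multiplicity factor $\sup_y N_{x_{S^c}}(y)$ in the area-formula step: one needs the number of solutions of $\Phi(x_S)=y$ in the cube at which $|\det D_{x_S}\Phi|\ge\mu$ — which dominates the count inside $A^0\subset D$ — to be finite, uniformly in $y$ and $x_{S^c}$. For a general real-analytic $F$ this follows from compactness together with the local finiteness of the fibres (they lie in the open set where $\det D_{x_S}\Phi\ne0$) and uniform-finiteness properties of subanalytic families, producing a constant that may depend on $F$. In the Pfaffian case, $L_kF$ and $\det D_{x_S}\Phi$ are again Pfaffian with formats controlled by those of $F$ and $L$, and the Khovanskii--Gabrielov bounds on the number of solutions of Pfaffian systems give a count depending only on $d$, $L$ and the format of $F$ — in particular not on $D$ or $\mu$ — which is precisely where the format hypothesis is used and why the constant is then independent of $D$. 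The remaining, purely bookkeeping, difficulty is to make the monomials in the $|E_i|$ agree across the three families of terms; this pins down the exact choice of the $\nu_k$, and it uses the vector identity $\sum_k\beta_k+I=\beta$ in the same way the scalar identity $\sum_k\theta_k=\theta-1$ governs the powers of $\epsilon$ and $\mu$.
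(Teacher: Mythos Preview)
Your argument is essentially the paper's own proof, restated in the language of the ${\cal L}^{\alpha,\beta}$ operators rather than the $d$-tree formalism: the paper packages the same induction as a single iteration (Proposition \eqref{mainprop}) with one parameter $\delta$, passing at each step to the set where a new Jacobian is small, while you run the recursion directly on the structural decomposition $L=\det[\partial_{x_{i_j}}L_kF]$ with separate thresholds $\nu_k$. The area-formula step and the role of the Khovanski{\u\i}/Gabrielov multiplicity bounds are identical.

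Two points need cleaning up. First, your reduction to $D\subset[0,1]^d$ by a partition of unity is unnecessary for the argument (the balancing of exponents works exactly, with no need for $|E_i|\le1$) and, more importantly, it damages the Pfaffian uniformity claim: cell-decomposition controls topological complexity, not the number of unit cubes needed to cover a large compact set, so the ``number of pieces'' would depend on $D$. Simply drop that reduction; the Khovanski{\u\i} bound on nondegenerate solutions is already uniform in $D$, and that is all you use.

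Second, setting $\nu_k=\epsilon$ for identity factors does not make $\mu^{-1}\prod_k\nu_k$ collapse to $\epsilon^{\alpha/\theta}\mu^{-1/\theta}$: one picks up an extra factor $(\epsilon^{\alpha}/\mu)^{|K|/\theta}$ and, worse, the $|E_i|$-exponents drift in the wrong direction. The fix is to use the \emph{same} balanced choice $\nu_k=\epsilon^{\alpha_k-\theta_k\alpha/\theta}\mu^{\theta_k/\theta}\prod_i|E_i|^{\theta_k\beta_i/\theta-\beta_{k,i}}$ for every $k$, including identity factors; then all three families of terms match the target exactly and no interpolation with the trivial bound is needed. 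For identity factors this $\nu_k$ equals $\epsilon\cdot(\mu\prod_i|E_i|^{\beta_i}/\epsilon^{\alpha})^{1/\theta}$, which is $\ge\epsilon$ precisely when $\epsilon^{\alpha}\le\mu\prod_i|E_i|^{\beta_i}$; in the complementary case the target already dominates $\prod_i|E_i|$ and the estimate is trivial.
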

A remark about the appearance of Pfaffian functions:  unlike the situation of Carbery, Christ, and Wright, elementary counterexamples show that the strongest uniformity results fail in the $C^\infty$ category for essentially topological reasons.  It appears that the most general class of functions for which strong uniformity results hold in this context are the Pfaffian functions.  For the moment, Pfaffian functions may be regarded as a generalizations of polynomials, and the format be regarded as a generalization of the degree.

Theorems \ref{ccwthm1}, \ref{ccwthm2}, and \ref{pssthm2} will be obtained as corollaries of theorems \ref{maintheorem} and \ref{oscthm}, which are geometric formulations of the sublevel set and oscillatory integral problems.  Roughly speaking, the idea is to treat the coordinate functions $x_1,\ldots,x_d$ and the function $F(x)$ as equals.  The derivative $\frac{\partial}{\partial x_i}$ is thought of as the unique vector field on $\R^d$ which annihilates $x_j$ for $j \neq i$ (properly normalized). Likewise, vector fields which annihilate some of the coordinate functions as well as the function $F$ will arise; these vector fields played no role in earlier work.  These vector fields lead to the classes ${\cal L}^{\alpha,\beta}$ and geometric generalizations.  

To that end, suppose that ${\cal M}$ is a $d$-dimensional real-analytic, compact, oriented manifold with boundary, and let $\omega$ be a real-analytic, nonnegative $d$-form on ${\cal M}$.  Now let $\pi_1,\ldots,\pi_m$ be real-analytic functions on ${\cal M}$, let $D \subset {\cal M}$ be closed, and consider the multilinear functional $T_{\pi,D}$ given by
\begin{equation}
T_{\pi,D}(f_1,\ldots,f_m) := \int_{D} \left( \prod_{i=1}^m f_i \circ \pi_i \right) \omega, \label{multilin}
\end{equation}
where $f_1,\ldots,f_m$ are locally integrable functions on $\R$.  The operators generalizing the classes ${\cal L}^{\alpha,\beta}$ will be called $d$-trees on $m$-indices (they correspond in a natural way to certain labeled, directed graphs) and are defined as follows:
\begin{definition}
Let ${\cal G}_0$ be the set $\{1,\ldots,m\}$.  For all integers $k \geq 1$, let ${\cal G}_k := {\cal G}_{k-1} \cup ({\cal G}_{k-1})^d$ (that is, the union of ${\cal G}_{k-1}$ and elements of the $d$-fold Cartesian product of ${\cal G}_{k-1}$'s).  Any element $G \in \bigcup_{k=0}^\infty {\cal G}_k$ will be called a $d$-tree on $m$ indices; all $G \in {\cal G}_0$ will be called trivial.
For any $G \in \bigcup_{k=0}^\infty {\cal G}_k$, define constants $\#G$, $G^{(1)},\ldots,G^{(m)}$, and the operator $\partial^G$ as follows:
\begin{enumerate}
\item If $G$ is trivial, define $\#G := 0$, and for each $i=1,\ldots,m$, define $G^{(i)} := 1$ if $G = i$ and $G^{(i)} := 0$ otherwise. Given functions $\pi_1,\ldots,\pi_m$,  define $\partial^G \pi := \pi_G$ (that is, $\partial^G \pi$ is the $G$-th function).
\item If $G \in {\cal G}_k \setminus {\cal G}_{k-1}$ for some $k \geq 1$, then $G = (G_1,\ldots,G_d)$ for some $G_1,\ldots,G_d \in {\cal G}_{k-1}$.  In this case, define $\# G := \# G_1 + \cdots + \# G_d + 1$ and $G^{(i)} := G^{(i)}_1 + \cdots + G^{(i)}_d$.  Given $\pi_1,\ldots,\pi_m$, define $\partial^G \pi$ to be the unique function on ${\cal M}$ which satisfies 
\[ d (\partial^{G_1} \pi) \wedge \cdots \wedge d (\partial^{G_d} \pi) = \left(\partial^{G} \pi \right) \omega. \]
\end{enumerate}
\end{definition}
With these definitions, the main theorems of this paper are stated as follows:
\begin{theorem}
Let ${\cal M}, \omega$, and $\pi_1,\ldots,\pi_m$ be as already defined. Let $D \subset {\cal M}$ be closed.  
For any $K$ and $G \in {\cal G}_K \setminus {\cal G}_0$, there exists a finite constant $C$ such that \label{maintheorem}
\[ \left| T_{\pi,D}(f_1,\ldots,f_m) \right| \leq C \left( \inf_{x \in D} |\partial^G \pi(x)| \right)^{-\frac{1}{\#G}} \prod_{j=1}^m ||f_j||_{L^{p_j,1}} \]
for any locally integrable functions $f_1,\ldots,f_m$, where $\frac{1}{p_j} = \frac{G^{(j)}}{\#G}$ for $j=1,\ldots,m$. 
If ${\cal M} \subset \R^d$, $\omega = dx_1 \wedge \cdots \wedge dx_d$, and the functions $\pi_j$ are Pfaffian on some connected, open set $U$ containing ${\cal M}$,  then 
the constant $C$ depends only on $d$, $K$, $m$, and the formats of $\pi_1,\ldots,\pi_m$.
\end{theorem}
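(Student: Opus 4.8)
The plan is to pass first to characteristic functions and then argue by induction on the integer $\#G$. Since $\omega\ge0$, one has $|T_{\pi,D}(f_1,\dots,f_m)|\le T_{\pi,D}(|f_1|,\dots,|f_m|)$; writing $|f_j|=\int_0^\infty\chi_{\{|f_j|>s\}}\,ds$, interchanging the resulting integrals by Tonelli, and using that (up to normalization) $\|f_j\|_{L^{p_j,1}}=\int_0^\infty|\{|f_j|>s\}|^{1/p_j}\,ds$, the theorem is reduced to the estimate
\[
\int_{\Omega}\omega\ \le\ C\,\Bigl(\inf_{x\in D}|\partial^G\pi(x)|\Bigr)^{-1/\#G}\prod_{j=1}^m|E_j|^{G^{(j)}/\#G},\qquad \Omega:=D\cap\bigcap_{i=1}^m\pi_i^{-1}(E_i),
\]
to be proved for all Borel sets $E_1,\dots,E_m\subseteq\mathbb{R}$. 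I would prove this by induction on $n:=\#G$.

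For the base case $n=1$ the tree is $G=(G_1,\dots,G_d)$ with each $G_\ell$ a trivial tree, i.e.\ an index $j_\ell$; if the $j_\ell$ are not pairwise distinct then $\partial^G\pi\equiv 0$ and there is nothing to prove, so assume they are distinct. Then $\partial^G\pi$ is, up to sign, the Jacobian with respect to $\omega$ of $\Pi:=(\pi_{j_1},\dots,\pi_{j_d}):\mathcal{M}\to\mathbb{R}^d$, so bounding $|\partial^G\pi|$ below by $\inf_D|\partial^G\pi|$ on $\Omega$ and applying the area formula gives
\[
\int_\Omega\omega\ \le\ \Bigl(\inf_D|\partial^G\pi|\Bigr)^{-1}\!\int_{\mathbb{R}^d}\#\bigl(\Pi^{-1}(y)\cap\Omega\bigr)\,dy\ \le\ N\,\Bigl(\inf_D|\partial^G\pi|\Bigr)^{-1}|E_{j_1}|\cdots|E_{j_d}|,
\]
since $\Pi(\Omega)\subseteq E_{j_1}\times\cdots\times E_{j_d}$. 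Here $N$ is the cardinality of a generic fiber of $\Pi$, which is finite because $\mathcal{M}$ is compact and the $\pi_i$ are real-analytic (uniform finiteness for subanalytic sets); in the Pfaffian setting $N$ is bounded in terms of $d$ and the formats by Khovanskii--Gabrielov-type bounds on the number of connected components of semi-Pfaffian sets. As $G^{(j)}/\#G=1$ for $j\in\{j_1,\dots,j_d\}$ and $0$ otherwise, this is the claim.

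The inductive step is the main new point: peel one subtree off $G$ by promoting its differential expression to a coordinate function. Permuting the children (which changes $\partial^G\pi$ only by a sign) I may assume $G_d$ is nontrivial, so $1\le\#G_d<n$; set $h_\ell:=\partial^{G_\ell}\pi$, so that $dh_1\wedge\cdots\wedge dh_d=(\partial^G\pi)\,\omega$. Adjoining $h_d$ to the list and setting $\tilde\pi:=(\pi_1,\dots,\pi_m,h_d)$ and $\tilde G:=(G_1,\dots,G_{d-1},m+1)$, one checks directly that $\partial^{\tilde G}\tilde\pi=\partial^G\pi$, $\#\tilde G=n-\#G_d$, $\tilde G^{(j)}=G^{(j)}-G_d^{(j)}$ for $j\le m$, and $\tilde G^{(m+1)}=1$. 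For a parameter $\mu>0$ I split $D$ into $D^{(\mu)}:=\{x\in D:|h_d(x)|\le\mu\}$ and its complement. On $D^{(\mu)}$ the inductive hypothesis for $\tilde G$, with $E_{m+1}:=[-\mu,\mu]$ (which contains $h_d(D^{(\mu)})$), controls $\int_{D^{(\mu)}\cap\Omega}\omega$ by $C\,\delta^{-a}\mu^{a}\prod_{j\le m}|E_j|^{(G^{(j)}-G_d^{(j)})/(n-\#G_d)}$ with $a=1/(n-\#G_d)$ and $\delta:=\inf_D|\partial^G\pi|$; on the complement, decomposing dyadically into $\{2^k\le|h_d|<2^{k+1}\}$, applying the inductive hypothesis for $G_d$ to each piece (on which $|\partial^{G_d}\pi|\ge 2^k$), and summing the geometric series controls $\int_{(D\setminus D^{(\mu)})\cap\Omega}\omega$ by $C\,\mu^{-b}\prod_{j\le m}|E_j|^{G_d^{(j)}/\#G_d}$ with $b=1/\#G_d$. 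Adding the two bounds and choosing $\mu$ to balance them gives a multiple of $\delta^{-ab/(a+b)}$ times the geometric mean of the two $|E_j|$-products with weights $b/(a+b)$ and $a/(a+b)$; since $a+b=n/(\#G_d(n-\#G_d))$ one gets $ab/(a+b)=1/n$ and the exponent of each $|E_j|$ collapses to $\frac{n-\#G_d}{n}\cdot\frac{G^{(j)}-G_d^{(j)}}{n-\#G_d}+\frac{\#G_d}{n}\cdot\frac{G_d^{(j)}}{\#G_d}=\frac{G^{(j)}}{n}$, which closes the induction.

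I expect the main obstacle to be not the interpolation arithmetic (which, once the estimate is phrased for characteristic functions, is forced) but the base-case multiplicity bound and the Pfaffian bookkeeping. One must justify the finiteness of $N$ in the real-analytic case and, crucially, its \emph{effective} form in the Pfaffian case (bounds on Betti numbers / numbers of connected components of semi-Pfaffian sets in terms of formats), and one must check that the induction does not spoil the format dependence: each adjoined function $h_d=\partial^{G_d}\pi$ is again Pfaffian, with format bounded in terms of the original formats, $d$, and $\#G_d\le K$, and only boundedly many functions (in terms of $d$ and $K$) are adjoined along any branch of the recursion, so the number of functions, their formats, and hence all the constants stay bounded in terms of $d$, $m$, $K$, and the original formats.
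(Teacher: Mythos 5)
Your proof is correct, and it reaches the theorem by a genuinely different route than the paper's.  The paper proves a single proposition that \emph{simultaneously} constrains all $d$-trees in ${\cal G}_K^N$: it works with one global parameter $\delta$, iteratively cuts away the region where a Jacobian is large (error $O(N\delta)$ each time), adjoins each resulting $\partial^G\pi$ as a fresh coordinate so that the next cut can use it, and then at the very end chooses $\delta$ so that the final constrained set must be empty.  No dyadic decomposition and no balancing are needed because the $\delta^{-\#G}$ thresholds are arranged to nest automatically.  Your argument is instead a strong induction on $\#G$ with a ``peel one subtree'' step: you promote $h_d=\partial^{G_d}\pi$ to a coordinate, split by the threshold $|h_d|\le\mu$, use the inductive hypothesis for $\tilde G$ on the small part, use the inductive hypothesis for $G_d$ together with a dyadic decomposition and a convergent geometric sum on the large part, and then optimize $\mu$; the arithmetic does collapse to $\delta^{-1/\#G}\prod_j|E_j|^{G^{(j)}/\#G}$ exactly as you computed.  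Both routes ultimately rest on the same ingredient (Proposition~\ref{covprop} plus a uniform bound $N$ on nondegenerate fiber counts, via Gabrielov in the analytic case and Khovanski{\u\i} in the Pfaffian case).  The paper's version is a bit slicker and proves a stronger-looking intermediate statement; yours is more modular, proves only what is needed for the given $G$, and makes the origin of each exponent transparent.  Two small things worth tightening if you write this up: state explicitly that the induction is over $n=\#G$ \emph{uniformly in the family $(\pi_j)$ and in $m$}, since the inductive hypothesis is invoked for the enlarged family $\tilde\pi$; and in the area-formula step the relevant count is of \emph{nondegenerate} preimages (degenerate ones contribute zero because the Jacobian vanishes), rather than a ``generic fiber'' count.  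Your concluding remarks about format bookkeeping in the Pfaffian case correctly identify the only serious verification left; the key points are that $h_d$ has format controlled by $d$, $\#G_d\le K$, and the formats of the $\pi_j$, and that the recursion depth and the number of adjoined functions are bounded in terms of $d$ and $K$.
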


\begin{theorem}
Let ${\cal M}, \omega$, and $\pi_1,\ldots,\pi_m$ be as defined above. Fix any $K$ and any $G \in {\cal G}_K \setminus {\cal G}_0$.  Let $D$ be any semi-analytic set in ${\cal M}$.  There exists a finite constant $C$ such that \label{oscthm}
\[ \left| \int_{E_\epsilon \cap D} e^{i \lambda \pi_n} \ \omega \right| \leq C \left( \inf_{x \in E_\epsilon \cap D} |\partial^G \pi(x)| \right)^{- \frac{1}{\# G}} |\lambda|^{-\frac{G^{(m)}}{\#G}} \prod_{j=1}^{m-1} \epsilon_j^{\frac{G^{(j)}}{\#G}}, \]
for any real $\lambda$ and any positive $\epsilon_1,\ldots,\epsilon_{m-1}$, where
\[ E_\epsilon := \set{ x \in {\cal M}}{ |\pi_j(x)| \leq \epsilon_j \ j=1,\ldots,m-1}. \]
If ${\cal M} \subset \R^d$, $\omega = dx_1 \wedge \cdots \wedge dx_d$, and the functions $\pi_j$ are Pfaffian on some connected, open set $U$ containing ${\cal M}$, and $D$ is semi-Pfaffian,  then the constant $C$ depends only on $d$, $K$, $m$, the formats of  $\pi_1,\ldots,\pi_m$, and the format of $D$.
\end{theorem}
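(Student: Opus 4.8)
The plan is to prove Theorem~\ref{oscthm} by induction on the depth $K$ of the tree $G$, run in parallel with (and drawing on) the sublevel estimate of Theorem~\ref{maintheorem}. The oscillatory gain will be produced by inserting the classical one-dimensional van der Corput lemma at one specific place in the recursion, rather than by any scalar ``sublevel $\Rightarrow$ oscillation'' principle: pushing the whole problem forward to a measure on $\R$ and trying to estimate its Fourier transform genuinely fails (a density equal to a large multiple of the indicator of a union of many short intervals satisfies the sublevel bounds yet has no Fourier decay), so the geometry of the map $x\mapsto(\partial^{G_1}\pi(x),\dots,\partial^{G_d}\pi(x))$ must be used directly.

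First I would reduce to a normalized local situation. Using a finite atlas for ${\cal M}$ and a partition of unity, I may assume $\omega = J\,dx_1\wedge\cdots\wedge dx_d$ on a coordinate box with $J\ge 0$. Decomposing $D\cap E_\epsilon$ into finitely many cells -- here invoking Pfaffian (o-minimal) cell decomposition with uniformly bounded complexity, which is also the source of the uniformity of $C$ in $\epsilon$ and $\lambda$ -- I may assume that on each cell $|\partial^G\pi|\ge\delta:=\inf_{D\cap E_\epsilon}|\partial^G\pi|$ and that $\Phi:=(\partial^{G_1}\pi,\dots,\partial^{G_d}\pi)$ is a local diffeomorphism with fibres of uniformly bounded cardinality (its $\omega$-Jacobian is exactly $\partial^G\pi$, by the defining identity $d(\partial^{G_1}\pi)\wedge\cdots\wedge d(\partial^{G_d}\pi)=(\partial^G\pi)\,\omega$). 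Changing variables by $\Phi$ then turns $\omega$ into a density bounded by a fixed multiple of $\delta^{-1}$ against Lebesgue measure on $\Phi(\text{cell})\subset\R^d$.

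Next comes the case analysis on how the index $m$ sits in $G=(G_1,\dots,G_d)$. If $G^{(m)}=0$, then $|e^{i\lambda\pi_m}|\le 1$ and the bound is just Theorem~\ref{maintheorem}. If some child $G_a$ is the trivial tree equal to $m$, then after the change of variables $\pi_m$ becomes the linear coordinate $u_a$; I would integrate first in $u_a$, applying the one-dimensional van der Corput lemma to the transported density -- which, being Pfaffian/real-analytic of uniformly bounded complexity in $u_a$, has uniformly bounded total variation on the relevant interval -- to gain a factor $|\lambda|^{-1}$, and then estimate the remaining $(d-1)$-dimensional integral in the other coordinates, still carrying the constraints $|\pi_j|\le\epsilon_j$, by the sublevel estimates of Theorem~\ref{maintheorem} applied to the lower-depth children $G_b$, $b\neq a$. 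In the remaining case $m$ occurs only inside nontrivial children $G_a$, each of depth $<K$; after the change of variables I would slice along the $\sigma_a=\partial^{G_a}\pi$ and apply the oscillatory inductive hypothesis to each such $G_a$ (the conditions $\sigma_a=u_a$ and $|\sigma_b|\le(\text{induced bounds})$ now playing the role of the $\epsilon$'s), together with Theorem~\ref{maintheorem} for the children not containing $m$. To make this recursion close it will probably be necessary to carry a slightly more general ``fibered'' statement through the induction, in which some coordinates have already been integrated out.

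The main obstacle, as I see it, is the bookkeeping that reconciles the exponents: each sub-estimate above produces powers of $|\lambda|^{-1}$, of $\delta^{-1}$, and of the $\epsilon_j$ attached to the \emph{subtrees}, and these must be assembled -- via a weighted H\"older/AM--GM split of the change-of-variables integral together with the arithmetic identities $\#G = 1 + \sum_a \#G_a$ and $G^{(j)} = \sum_a G_a^{(j)}$ -- into the precise exponents $\tfrac1{\#G}$, $\tfrac{G^{(m)}}{\#G}$, $\tfrac{G^{(j)}}{\#G}$ of the statement. Two further technical points need care: (i) the van der Corput step requires the transported density to be of controlled bounded variation, which is exactly what the Pfaffian/real-analytic hypothesis buys -- without it the oscillatory gain really does fail; and (ii) all constants must be uniform in $\epsilon$ and $\lambda$, which in the Pfaffian case follows from uniform finiteness for Pfaffian families, and in the bare real-analytic case from the observation that the construction only ever uses finitely many complexity parameters of $D$, and these stay bounded over the sublevel sets $E_\epsilon$ that arise.
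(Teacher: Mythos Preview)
Your route is genuinely different from the paper's, and as written it has a real gap.

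The paper does not induct on the depth of $G$, does not perform the global change of variables $\Phi=(\partial^{G_1}\pi,\ldots,\partial^{G_d}\pi)$, and does not use van der Corput. Instead it carries out a single geometric integration by parts via Stokes' theorem: it writes down the $(d-1)$-form
\[
e^{i\lambda\pi_m}\Bigl(\prod_j\eta_j\circ\pi_j\Bigr)\sum_{\sigma}\psi_\sigma\bigl(\partial^{(m,\cdot)}\pi\bigr)\,d\pi_{\sigma_1}\wedge\cdots\wedge d\pi_{\sigma_{d-1}},
\]
where $\sigma$ ranges over $(d{-}1)$-subsets of $\{1,\ldots,m-1\}$ and the $\psi_\sigma$ are engineered so that $\sum_\sigma s_\sigma\psi_\sigma(s)=1-\prod_\sigma\eta_0(\delta_\sigma s_\sigma)$. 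Differentiating $e^{i\lambda\pi_m}$ produces $i\lambda\,\partial^{(m,\sigma)}\pi\,\omega$; summing in $\sigma$ thus returns the original integrand minus a copy localized to $\{|\partial^{(m,\sigma)}\pi|\ \text{small for all}\ \sigma\}$, with the boundary term and the remaining pieces controlled by Proposition~\ref{covprop}. One then feeds the new constraints $|\partial^{(m,\sigma)}\pi|\le \lambda^{-1}\delta^{-1}\epsilon_{\sigma_1}\cdots\epsilon_{\sigma_{d-1}}$ back into the \emph{same} iterative machine that proved Theorem~\ref{maintheorem}. All the $|\lambda|$-gain comes from Stokes, one factor per application, and the argument never leaves ${\cal M}$ or changes coordinates globally.

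The concrete gap in your plan is the step after van der Corput. Take the case where some child $G_a$ equals $m$. After changing variables to $u=\Phi(x)$, van der Corput in $u_a$ gives roughly $C|\lambda|^{-1}\delta^{-1}$ for each fixed $(u_b)_{b\ne a}$, and you then propose to bound the remaining $(d{-}1)$-dimensional integral ``by the sublevel estimates of Theorem~\ref{maintheorem} applied to the lower-depth children $G_b$''. But Theorem~\ref{maintheorem} bounds the $\omega$-measure on ${\cal M}$ of a set defined by constraints on the $\pi_j$, given a \emph{lower} bound on some $\partial^{G'}\pi$; it says nothing about the Lebesgue measure of the image of $E_\epsilon\cap D$ under $(\partial^{G_b}\pi)_{b\ne a}$, and for nontrivial $G_b$ you have neither an upper bound on $|u_b|=|\partial^{G_b}\pi|$ nor a lower bound on any derivative built from those children alone. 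The same obstruction, only worse, arises in the case where $m$ sits inside nontrivial children: fixing $u_a=\partial^{G_a}\pi$ and ``applying the oscillatory inductive hypothesis to $G_a$'' does not produce an instance of Theorem~\ref{oscthm}, which is a $d$-dimensional statement on ${\cal M}$ with constraints on the original $\pi_j$, not a statement on the slice $\{\partial^{G_a}\pi=u_a\}$ with constraints on the other $\partial^{G_b}\pi$. You flag this yourself (``a slightly more general fibered statement''), but formulating and proving that fibered statement is the whole content of the argument, and it is not supplied; the Stokes approach in the paper is precisely what avoids having to do this.
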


The proofs of theorems \ref{maintheorem} and \ref{oscthm} are contained in sections \ref{proofsec} and \ref{oscsec}, respectively.  The methods used are extremely elementary and circumvent the need for the intricate decompositions performed by Phong, Stein, and Sturm (while retaining the induction argument and repeatedly localizing to the set where all derivatives are small until this set must necessarily be empty).  Section \ref{reducesec} illustrates the connection between the operators ${\cal L}^{\alpha,\beta}$ and a corresponding family of $d$-trees on $d+1$ indices, and derives theorems \ref{ccwthm1}, \ref{ccwthm2}, and \ref{pssthm2} from theorems \ref{maintheorem} and \ref{oscthm}.

\section{Reduction to theorems \ref{maintheorem} and \ref{oscthm}}
\label{reducesec}

It may be helpful to consider a natural directed graph which can be associated to each $d$-tree, representing its overall ``shape.''  If If $G$ is trivial, its directed graph consists of one vertex (called the root) and no edges.  If $G = (G_1,\ldots,G_d)$ is nontrivial, then the graph associated to $G$ is obtained by taking a disjoint union of the graphs corresponding to $G_1,\ldots, G_d$, adding another vertex to this union (the new root), and adding $d$ additional edges, each directed from the new root to the old roots of $G_1,\ldots,G_d$. 
Note that $G^{(1)} + \cdots + G^{(m)}$ equals the number of vertices in the shape graph of $G$ which have no children (called the leaves of the graph) and $\#G + G^{(1)} + \cdots + G^{(m)}$ equals the total number of vertices in the shape graph. 

\begin{figure}
\centering
\psset{unit=4.9cm}
\begin{pspicture}(0,-0.05)(2.125,0.9)
\pscircle*(0.45,0.9){0.015}

\psline[arrows=->,arrowsize=0.045,arrowinset=0](0.45,0.9)(0.0,0.6)
\psline[arrows=->,arrowsize=0.045,arrowinset=0](0.45,0.9)(0.15,0.6)
\psline[arrows=->,arrowsize=0.045,arrowinset=0](0.45,0.9)(0.3,0.6)
\psline[arrows=->,arrowsize=0.045,arrowinset=0](0.45,0.9)(0.45,0.6)
\psline[arrows=->,arrowsize=0.045,arrowinset=0](0.3,0.6)(0.0,0.3)
\psline[arrows=->,arrowsize=0.045,arrowinset=0](0.3,0.6)(0.15,0.3)
\psline[arrows=->,arrowsize=0.045,arrowinset=0](0.3,0.6)(0.3,0.3)
\psline[arrows=->,arrowsize=0.045,arrowinset=0](0.3,0.6)(0.45,0.3)
\psline[arrows=->,arrowsize=0.045,arrowinset=0](0.15,0.3)(0.0,0)
\psline[arrows=->,arrowsize=0.045,arrowinset=0](0.15,0.3)(0.15,0)
\psline[arrows=->,arrowsize=0.045,arrowinset=0](0.15,0.3)(0.3,0)
\psline[arrows=->,arrowsize=0.045,arrowinset=0](0.15,0.3)(0.45,0)

\pscircle*(0.0,0.6){0.015} 
\pscircle*(0.15,0.6){0.015}
\pscircle*(0.3,0.6){0.015} 
\pscircle*(0.45,0.6){0.015}
\pscircle*(0.0,0.3){0.015} 
\pscircle*(0.15,0.3){0.015}
\pscircle*(0.3,0.3){0.015} 
\pscircle*(0.45,0.3){0.015}
\pscircle*(0.0,0){0.015} 
\pscircle*(0.15,0){0.015}
\pscircle*(0.3,0){0.015} 
\pscircle*(0.45,0){0.015}

\uput[-90](0.0,0.6){1} 
\uput[-90](0.15,0.6){2}
\uput[-90](0.45,0.6){4}
\uput[-90](0.0,0.3){1} 
\uput[-90](0.3,0.3){3} 
\uput[-90](0.45,0.3){4}
\uput[-90](0.0,0){5}
\uput[-90](0.15,0){2}
\uput[-90](0.3,0){3} 
\uput[-90](0.45,0){4}

\pscircle*(1.5,0.9){0.015}
\psline[arrows=->,arrowsize=0.045,arrowinset=0](1.5,0.9)(1,0.45)
\psline[arrows=->,arrowsize=0.045,arrowinset=0](1.5,0.9)(1.333,0.45)
\psline[arrows=->,arrowsize=0.045,arrowinset=0](1.5,0.9)(1.667,0.45)
\psline[arrows=->,arrowsize=0.045,arrowinset=0](1.5,0.9)(1.999,0.45)
\psline[arrows=->,arrowsize=0.045,arrowinset=0](1,0.45)(0.875,0.0)
\psline[arrows=->,arrowsize=0.045,arrowinset=0](1,0.45)(0.9583,0.0)
\psline[arrows=->,arrowsize=0.045,arrowinset=0](1,0.45)(1.0417,0.0)
\psline[arrows=->,arrowsize=0.045,arrowinset=0](1,0.45)(1.125,0.0)
\psline[arrows=->,arrowsize=0.045,arrowinset=0](1.333,0.45)(1.2083,0.0)
\psline[arrows=->,arrowsize=0.045,arrowinset=0](1.333,0.45)(1.2917,0.0)
\psline[arrows=->,arrowsize=0.045,arrowinset=0](1.333,0.45)(1.375,0.0)
\psline[arrows=->,arrowsize=0.045,arrowinset=0](1.333,0.45)(1.4583,0.0)
\psline[arrows=->,arrowsize=0.045,arrowinset=0](1.667,0.45)(1.5417,0.0)
\psline[arrows=->,arrowsize=0.045,arrowinset=0](1.667,0.45)(1.625,0.0)
\psline[arrows=->,arrowsize=0.045,arrowinset=0](1.667,0.45)(1.7083,0.0)
\psline[arrows=->,arrowsize=0.045,arrowinset=0](1.667,0.45)(1.7917,0.0)
\psline[arrows=->,arrowsize=0.045,arrowinset=0](1.999,0.45)(1.875,0.0)
\psline[arrows=->,arrowsize=0.045,arrowinset=0](1.999,0.45)(1.9583,0.0)
\psline[arrows=->,arrowsize=0.045,arrowinset=0](1.999,0.45)(2.0417,0.0)
\psline[arrows=->,arrowsize=0.045,arrowinset=0](1.999,0.45)(2.125,0.0)

\uput[-90](0.875,0.0){5}
\uput[-90](0.9583,0.0){2}
\uput[-90](1.0417,0.0){3}
\uput[-90](1.125,0.0){4}
\uput[-90](1.2083,0.0){1}
\uput[-90](1.2917,0.0){5}
\uput[-90](1.375,0.0){3}
\uput[-90](1.4583,0.0){4}
\uput[-90](1.5417,0.0){1}
\uput[-90](1.625,0.0){2}
\uput[-90](1.7083,0.0){5}
\uput[-90](1.7917,0.0){4}
\uput[-90](1.875,0.0){1}
\uput[-90](1.9583,0.0){2}
\uput[-90](2.0417,0.0){3}
\uput[-90](2.125,0.0){5}

\pscircle*(1,0.45){0.015}  \pscircle*(1.333,0.45){0.015}  \pscircle*(1.667,0.45){0.015}  \pscircle*(1.999,0.45){0.015}
\pscircle*(0.875,0.0){0.015}  \pscircle*(0.9583,0.0){0.015}  \pscircle*(1.0417,0){0.015}  \pscircle*(1.125,0){0.015}
\pscircle*(1.2083,0){0.015}  \pscircle*(1.2917,0){0.015}  \pscircle*(1.375,0){0.015}  \pscircle*(1.4583,0){0.015}
\pscircle*(1.5417,0){0.015}  \pscircle*(1.625,0){0.015}  \pscircle*(1.7083,0){0.015}  \pscircle*(1.7917,0){0.015}
\pscircle*(1.875,0){0.015}  \pscircle*(1.9583,0){0.015}  \pscircle*(2.0417,0){0.015}  \pscircle*(2.125,0){0.015}
\end{pspicture}
\caption{The shape graphs (with labels on the leaves) for the examples \eqref{mixedd} and \eqref{hessian}, respectively in the case $d=4$. 
}
\label{graphs}
\end{figure}
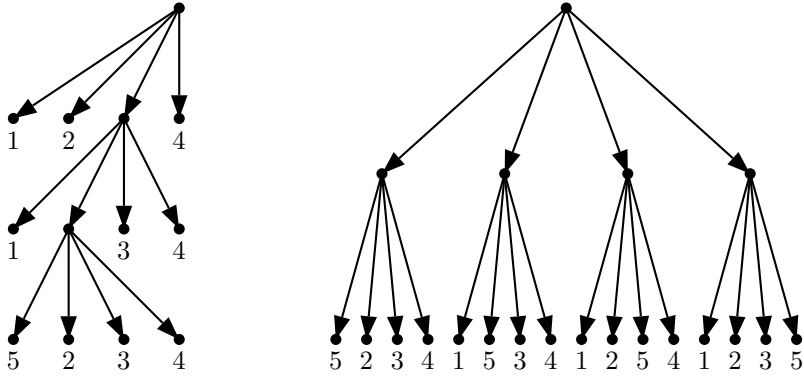

The theorems \ref{ccwthm1}, \ref{ccwthm2} and \ref{pssthm2} are derived from theorems \ref{maintheorem} and \ref{oscthm} in the following way.  First take ${\cal M}$ to be a closed Euclidean ball in $\R^d$ of some large radius and $\omega = dx_1 \wedge \cdots \wedge dx_d$.  Regarding the functions $\pi_j$, set $m = d+1$, take $\pi_j(x) = x_j$ for $j=1,\ldots,d$ and let $\pi_{d+1} = F$.   Consider the example of the $d$-tree $G$ given by 
\begin{equation}
 (1,2,(1,(d+1,2,\ldots,d),3,\ldots,d),4,\ldots,d) \label{mixedd}
\end{equation}
has $\partial^G \pi = \frac{\partial}{\partial x_3}  \frac{\partial}{\partial x_2}  \frac{\partial}{\partial x_1} F$.  Continuing this nesting process, any mixed derivative $\partial^\beta F$ for some multiindex $\beta$ may be realized as $\partial^G \pi$ for some $d$-tree which has $\#G = |\beta|$, $G^{(d+1)} = 1$ and $G^{(j)} = |\beta| - \beta_j$ for for all remaining indices.  
For the $d$-tree $G'$ given by
\begin{equation}
 ((d+1,2,\ldots,d),(1,i,3,\ldots,d),\ldots,(1,\ldots,d-1,d+1)), \label{hessian}
\end{equation}
on the other hand, $\partial^{G'} \pi$ is exactly the determinant of the Hessian $\partial^2_x F$.  In this case $\#G' = d+1$, $(G')^{(j)} = d-1$ for $j=1,\ldots,d$, $(G')^{(i)} = d$, and $(G')^{(j')} = 0$ otherwise.  
In general, given some $L \in {\cal L}^{\alpha,\beta}$, the function $L F$ will equal $\partial^G \pi$ for some $G$ which satisfies $\#G = |\beta|+1-\alpha$, has $G^{(i)} = \#G - \beta_i$ for $i=1,\ldots,d$ and $G^{(d+1)} = \alpha$.  This fact is readily established by induction upon noting that, if $L$ is related to $L_1,\ldots,L_n$ as described by \eqref{opind}, then
\[ (LF) \omega = d (L_1 f) \wedge \cdots \wedge d(L_n f) \wedge d x_{i_{n+1}} \wedge \cdots \wedge d x_{i_d} \]
where $i_{n+1},\ldots,i_d$ are suitably chosen indices taken from $\{1,\ldots,d\}$.  In particular, if each of $L_1,\ldots,L_n$ corresponds to a $d$-tree $G_1,\ldots,G_n$ with the asserted numerology, then $L$ corresponds to the $d$-tree $G := (G_1,\ldots,G_n,i_{n+1},\ldots,i_d)$ which satisfies
\begin{align*}
\#G & = \#G_1 + \cdots + \#G_n + 1 \\
 & = \left( |\beta_1| + \cdots + |\beta_n| + n \right) + 1  -  \left( \alpha_1 +\cdots + \alpha_n \right) = |\beta| + 1 - \alpha, \\
G^{(d+1)} & = G_1^{(d+1)} + \cdots + G^{(d+1)}_n \\
& = \left( \alpha_1 +\cdots + \alpha_n \right) = \alpha, \\
G^{(i)} & = G_1^{(i)} + \cdots + G^{(i)}_n  + \delta_{i_{n+1},i} + \cdots + \delta_{i_{d},i} \qquad (i \neq d+1) \\
& = |\beta| + 1 - \alpha - (\beta_1)_i - \cdots - (\beta_n)_i - \delta_{i_1,i} - \cdots - \delta_{i_n,i} \\
& = |\beta| + 1 - \alpha - \beta_i,
\end{align*}
where $\delta_{i,j}$ is the usual Kronecker delta.  With these numerical relationships in place, it only remains to observe that theorem \ref{pssthm2} follows \ref{maintheorem} for these special ${\cal M}$, $\omega$, and $\pi_j$'s when $f_{d+1} := \chi_{[-\epsilon,\epsilon]}$.  Theorem \ref{ccwthm1} will follow if, in addition, $f_1 = \cdots = f_{d} = \chi_{[0,1]}$.  Finally, note that theorem \ref{ccwthm2} follows from theorem \ref{oscthm} with these same substitutions.

It is perhaps also worth noting that another important special case of \eqref{multilin} occurs when ${\cal M}$ is a closed, Euclidean ball in $\R^d$ and $\omega = dx_1 \wedge \cdots \wedge dx_d$ as above,  but the functions $\pi_1,\ldots,\pi_m$ are linear.  In this case, the functional \eqref{multilin} is a H\"{o}lder-Brascamp-Lieb functional, the study of which was initiated by Brascamp and Lieb \cite{bl1976}. More recent work on such functionals is due to Bennett, Carbery, Christ, and Tao \cite{bcct2008}, \cite{bcct2005}, Bennett and Bez \cite{bb2009}, and others.  In this special sub-case of \eqref{multilin}, the entire problem reduces to a question of finding maximal subsets of $\{1,\ldots,m\}$ for which the corresponding functions $\pi_1,\ldots,\pi_m$ have linearly independent differentials $d \pi_1,\ldots, d \pi_m$.  When the functions $\pi_i$ are allowed to be nonlinear, complicated partial degeneracies arise (in which the differentials $d \pi_i$ may be linearly dependent at some points but not all) which were not encountered in any of these earlier situations.

\section{Proof of theorem \ref{maintheorem}}
\label{proofsec}

Given the pair $({\cal M},\omega)$, it will be necessary to address the issue of the topological complexity of the sublevel sets associated to these functions $\pi_1,\ldots,\pi_m$ as well as the derivatives $\partial^G \pi$.  This complexity appears indirectly in the proofs of theorems \ref{maintheorem} and \ref{oscthm} when one is forced to count nondegenerate solutions of various systems of equations.  To begin, consider the following definitions:
\begin{definition}
Any $d$-tree $G$ will be said to satisfy the weak multiplicity $N$ condition on ${\cal M}$ when either $G$ is trivial or $G = (G_1,\ldots,G_d)$ for some $G_1,\ldots,G_d$ such that, the system of equations $\partial^{G_1} \pi = c_1,\ldots,\partial^{G_d} \pi = c_d$ has at most $N$ nondegenerate solutions in ${\cal M}$ for any $(c_1,\ldots,c_d) \in \R^d$. 
\end{definition}

\begin{definition}
Any $d$-tree $G$ will satisfy the strong multiplicity $N$ condition on ${\cal M}$ if $G$ is trivial or if $G$ satisfies the weak multiplicity $N$ condition and $G = (G_1,\ldots,G_d)$ for some $G_1,\ldots,G_d$ which also satisfy the strong multiplicity $N$ condition on ${\cal M}$.  Let ${\cal G}_K^N({\cal M},\pi)$ be the collection of {\it nontrivial} $d$-trees $G \in {\cal G}_K$ which satisfy the strong multiplicity $N$ condition on ${\cal M}$.
\end{definition}
In other words, $G$ satisfies the strong multiplicity $N$ condition on ${\cal M}$ if it can be inductively built out of sub-trees all of which satisfy the weak multiplicity $N$ condition.
The class ${\cal G}_K^N({\cal M},\pi)$ will only {\it fail} to equal ${\cal G}_K \setminus {\cal G}_0$ when a system of equations arises involving the $\pi$'s and their derivatives which has more than $N$ nondegenerate solutions in ${\cal M}$.  In the real-analytic category, the compactness of ${\cal M}$ guarantees that this cannot happen for arbitrarily large $N$.  This is a consequence of a result by Gabri{e}lov \cite{gabrielov1968}, namely, that the number of connected components of an analytic family of semi-analytic sets is locally bounded.  In general, of course, the bound for $N$ will depend in some unknown way on the functions $\pi_j$.  In the special case of Pfaffian functions, though, effective bounds are possible, thanks to a theorem of Khovanski{\u\i} \cite{khovanskii1980} generalizing B\'{e}zout's theorem.  This class of functions is not commonly referenced in earlier works on sublevel sets or oscillatory integrals; it includes all elementary functions (on suitable domains), but not all analytic functions.  See Khovanski{\u\i} \cite{khovanskii1991} or Gabrielov and Vorobjov \cite{gv2004} for more in this area, or van den Dries \cite{vandendries1998} for extensions and generalizations in the context of o-minimality.

\begin{definition} 
A Pfaffian chain of the order $r \geq 0$ and degree $\alpha \geq 1$ in a connected, open set $U \subset \R^d$ is a sequence of analytic functions $f_1,\ldots,f_r$ in $G$ satisfying differential equations
\[ df_j(x) = \sum_{i=1}^d g_{ij}(x,f_1(x),\ldots,f_j(x)) dx_i \]
for $j=1,\ldots,r$, where $g_{ij} \in \R[x_1,\ldots,x_d,y_1,\ldots,y_j]$ has degree at most $\alpha$.  A Pfaffian function of order $r$ and degree $(\alpha,\beta)$ for $r \geq 0$ and $\alpha,\beta \geq 1$ is any function expressible as $f(x) = P(x,f_1(x),\ldots,f_r(x))$ for some polynomial $P$ of degree at most $\beta$ and some Pfaffian chain $f_1,\ldots f_r$ of order $r$ and degree $\alpha$.
\end{definition}

\begin{theorem}[Khovanski{\u\i} \cite{khovanskii1980}]
Suppose $f_1,\ldots,f_d$ are Pfaffian functions in a connected, open set $U \subset \R^d$ having a common Pfaffian chain of order $r$ and degrees $(\alpha,\beta_i)$.  Then the number of nondegenerate solutions of the system $f_1(x) = \cdots = f_d(x) = 0$ in $U$ does not exceed
\[ 2^{r(r-1)/2} \beta_1 \cdots \beta_d \left( \min\{d,r\} \alpha + \beta_1 + \cdots + \beta_d - d + 1 \right)^r. \]
\end{theorem}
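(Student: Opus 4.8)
The plan is to prove this by induction on the order $r$ of the common Pfaffian chain, following Khovanski{\u\i}'s ``differentiate and apply Rolle'' method. Write the chain as $\phi_1,\dots,\phi_r$ with $d\phi_j=\sum_i g_{ij}(x,\phi_1,\dots,\phi_j)\,dx_i$, $\deg g_{ij}\le\alpha$, and $f_k=P_k(x,\phi_1,\dots,\phi_r)$ with $\deg P_k\le\beta_k$. When $r=0$ the $f_k$ are honest polynomials of degrees $\beta_1,\dots,\beta_d$, the asserted bound collapses to $\beta_1\cdots\beta_d$, and the statement reduces to the real B\'{e}zout theorem: the isolated real zeros of a polynomial system are at most the number of complex zeros of a generic perturbation, which is $\beta_1\cdots\beta_d$.

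For the inductive step I would first apply a generic small translation $f_k\mapsto f_k-\varepsilon_k$, which cannot decrease the number of nondegenerate solutions and which makes each intermediate locus $\Gamma=\{f_1=\cdots=f_{d-1}=0\}$ a smooth curve in $U$ along which $df_1\wedge\cdots\wedge df_{d-1}\neq 0$; the nondegenerate solutions of the full system are then nondegenerate zeros of $f_d|_\Gamma$. The engine is the multidimensional Rolle lemma: on a connected component of $\Gamma$, between two consecutive nondegenerate zeros of $f_d|_\Gamma$ there lies a zero of the derivative of $f_d$ along $\Gamma$, which equals, up to a nonvanishing factor, the Jacobian $J:=\det(\nabla f_1,\dots,\nabla f_{d-1},\nabla f_d)$ restricted to $\Gamma$. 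This yields
\[
\#\{\text{nondeg. zeros of }f_d|_\Gamma\}\ \le\ \#\{\text{components of }\Gamma\}\ +\ \#\{\text{nondeg. solutions of }f_1=\cdots=f_{d-1}=J=0\}.
\]
Using $\partial\phi_j/\partial x_i=g_{ij}$ one checks that $J$ is again a polynomial in $x,\phi_1,\dots,\phi_r$ with the same chain, of controlled degree (each row $\nabla f_k$ has degree $\le\beta_k-1+\alpha$, and the chain relations can be invoked at most $\min\{d,r\}$ times in total across the recursion — this is the source of the $\min\{d,r\}$). The term $\#\{\text{components of }\Gamma\}$ I would control by passing to the graph $\widetilde\Gamma=\{(x,\phi_r(x)):x\in\Gamma\}\subset\R^{d+1}$, on which the lifted functions $\widetilde f_k(x,y):=P_k(x,\phi_1(x),\dots,\phi_{r-1}(x),y)$ have a common chain of order $r-1$; the number of components of $\Gamma$ is then bounded (via critical points of a generic linear function on $\widetilde\Gamma$ together with incidences with $\partial U$) by the number of nondegenerate solutions of an order-$(r-1)$ Pfaffian system, to which the inductive hypothesis applies. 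Iterating the Rolle step $r$ times — the number of active chain relations dropping by one at each descent, which produces the power $r$ and, from the two subcases (closed loops versus arcs) at each level, the factor $2^{r(r-1)/2}$ — and collecting the B\'{e}zout contribution at the bottom assembles the claimed bound $2^{r(r-1)/2}\beta_1\cdots\beta_d(\min\{d,r\}\alpha+\beta_1+\cdots+\beta_d-d+1)^r$.

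The main obstacle is not the Rolle lemma but the bookkeeping that must accompany it. First, controlling $\#\{\text{components of }\Gamma\}$ when $U$ is noncompact genuinely requires the passage to one higher dimension and a careful treatment of arcs escaping to $\partial U$ (handled by an exhaustion of $U$ or by adjoining a generic boundary equation), and this must be arranged so that the auxiliary systems still fall under the inductive hypothesis. Second, one must track the degree of $J$ simultaneously in $x$ and in each $\phi_j$ through all $r$ levels so that the coefficient $\min\{d,r\}\alpha$ and the exponent $r$ come out exactly rather than with a cruder constant. A secondary technical point is to justify, uniformly across the induction, that an arbitrarily small translation renders all intermediate loci smooth while preserving every nondegenerate solution — a standard Sard/transversality argument, but one that needs to be set up compatibly with the recursive structure.
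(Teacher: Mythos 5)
The theorem is stated in the paper purely as a cited result (Khovanski{\u\i}, 1980); the paper supplies no proof and does not attempt one, since it is used only as an off-the-shelf finiteness input for bounding the multiplicity constants $N$. So there is no ``paper's own proof'' to compare your attempt against.

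Judged on its own terms, the skeleton you sketch --- induction on the chain order $r$, the real B\'ezout theorem at $r=0$, a Rolle-type descent from nondegenerate zeros of $f_d$ along the curve $\Gamma = \{f_1 = \cdots = f_{d-1}=0\}$ to zeros of the Jacobian determinant $J$, and a lift to the graph of $\phi_r$ to reduce the chain order --- is indeed the correct outline of Khovanski{\u\i}'s argument, and you correctly observe that $J$ remains a Pfaffian function on the same chain. The genuine gap is in the middle of the inductive step: after replacing $f_d$ by $J$ the order $r$ has not decreased at all (you have merely raised the degree of one equation), and the lift to $(x,y)\in\R^{d+1}$ both increases the ambient dimension and introduces the extra equation $y-\phi_r(x)=0$, which itself still involves $\phi_r$. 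Closing the induction requires the Rolle theorem for the codimension-one Pfaffian foliation defined by $dy - \sum_i g_{ir}(x,\phi_1,\dots,\phi_{r-1},y)\,dx_i = 0$, which bounds zeros on a leaf by zeros of an auxiliary system in which $y$ is a free coordinate and the chain has genuinely dropped to order $r-1$; this is the step that actually powers the recursion, and your sketch gestures at it (``passing to the graph'') without carrying it out. Correspondingly, your attribution of the factor $2^{r(r-1)/2}$ to ``closed loops versus arcs'' and of $\min\{d,r\}$ to a count of chain-relation invocations is heuristic rather than a derivation; as written this is a credible program, not a proof, and the exact constants do not yet fall out of it.
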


The main analytic tool relating $d$-trees to the estimation of \eqref{multilin} comes in the form of the following proposition.  Perhaps unsurprisingly, it is nothing more than a geometric version of the H\"{o}lder/Fubini argument that one would use in the case of H\"{o}lder-Brascamp-Lieb inequalities:
\begin{proposition}
Suppose ${\cal M}$ is an oriented $d$-dimensional manifold.  \label{covprop} Suppose $\pi_1,\ldots,\pi_d$ are $C^1$ functions on ${\cal M}$.  For any $t = (t_1,\ldots,t_d) \in \R^d$, let $N_\pi(t)$ be the number of nondegenerate solutions $x \in {\cal M}$ of the system of equations $\pi_1(x) = t_1,\ldots, \pi_d(x) = t_d$. Suppose $\psi \in L^\infty({\cal M})$ and $\phi$ is a measurable function on $\R^d$.  Then \label{cov}
\[ \left| \int_{\cal M} \phi(\pi_1,\ldots,\pi_d) \psi d \pi_1 \wedge \cdots \wedge d \pi_d \right| \leq  ||\psi||_\infty \int_{\R^d} |\phi(t)| N_\pi(t) dt. \]
\end{proposition}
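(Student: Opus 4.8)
The plan is to identify the left-hand integral with a Jacobian-weighted integral of the map $\Pi := (\pi_1,\ldots,\pi_d):{\cal M}\to\R^d$ and then invoke the classical area (change-of-variables) formula for locally Lipschitz maps between equidimensional spaces. As a first step I would discard $\psi$. Writing $d\pi_1\wedge\cdots\wedge d\pi_d = \det\bigl(\tfrac{\partial\pi_i}{\partial x_j}\bigr)\,dx_1\wedge\cdots\wedge dx_d = (\det D\Pi)\,dx_1\wedge\cdots\wedge dx_d$ in any positively oriented coordinate chart $x=(x_1,\ldots,x_d)$, and using a partition of unity subordinate to an oriented atlas, one obtains
\[ \left| \int_{\cal M} \phi(\pi_1,\ldots,\pi_d)\,\psi\, d\pi_1\wedge\cdots\wedge d\pi_d \right| \leq ||\psi||_\infty \int_{\cal M} |\phi(\pi_1,\ldots,\pi_d)|\,|d\pi_1\wedge\cdots\wedge d\pi_d|, \]
where on the right $|d\pi_1\wedge\cdots\wedge d\pi_d|$ denotes the associated density, equal to $|\det D\Pi|$ times the coordinate volume on each chart. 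So it suffices to show this last quantity equals $\int_{\R^d}|\phi(t)|\,N_\pi(t)\,dt$.

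Next, let $Z:=\set{x\in{\cal M}}{\det D\Pi(x)=0}$ be the critical set of $\Pi$ (defined chartwise; closed since $\Pi$ is $C^1$). On $Z$ the density $|d\pi_1\wedge\cdots\wedge d\pi_d|$ vanishes, so the integral over ${\cal M}$ equals the integral over the open set ${\cal M}\setminus Z$. I would then cover ${\cal M}\setminus Z$ by oriented coordinate charts $U_\alpha$, pick a partition of unity $\{\rho_\alpha\}$, and apply the area formula in each chart to the locally Lipschitz map $\Pi$ with the nonnegative weight $\rho_\alpha\,|\phi\circ\Pi|$. Summing over $\alpha$ and using $\sum_\alpha\rho_\alpha\equiv 1$ on ${\cal M}\setminus Z$ gives
\[ \int_{{\cal M}\setminus Z} |\phi(\Pi(x))|\,|\det D\Pi(x)|\,dx = \int_{\R^d} |\phi(t)| \, \#\bigl( ({\cal M}\setminus Z)\cap \Pi^{-1}(t) \bigr)\, dt, \]
all manipulations with the (possibly infinite, nonnegative) sums being justified by Tonelli. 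Finally, I would observe that $({\cal M}\setminus Z)\cap\Pi^{-1}(t)$ is exactly the set of points $x$ with $\pi_1(x)=t_1,\ldots,\pi_d(x)=t_d$ at which $D\Pi$ is invertible, i.e.\ the set of nondegenerate solutions, so its cardinality is $N_\pi(t)$ (this also shows $N_\pi$ is measurable, being the counting function supplied by the area formula). Chaining the three displays yields the assertion, in fact with equality once $\psi$ is reinstated using $|\psi|\leq||\psi||_\infty$.

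The argument is essentially mechanical, and I expect the reduction to the area formula — rather than any estimate — to be the whole of the matter; the only points needing care are bookkeeping ones. One must check that the passage from ${\cal M}$ to $\R^d$ through charts and a partition of unity is legitimate, which is where it is used that ${\cal M}$ is (as usual) paracompact so that such partitions exist, and that the chart orientations are chosen consistently so the local Jacobians are the genuine $\det D\Pi$. If ${\cal M}$ has boundary, one notes that $\partial{\cal M}$ has measure zero in ${\cal M}$ and that $\Pi(\partial{\cal M})$ has measure zero in $\R^d$ (being the $C^1$-image of a manifold of dimension $d-1$), so the boundary affects neither side and may be discarded. And one should confirm the hypotheses of the area formula: $C^1$ regularity yields local Lipschitz continuity, which suffices, and since every integrand in sight is nonnegative the identity holds in $[0,\infty]$ irrespective of the integrability of $\phi$, so that $\Pi^{-1}(t)$ being countably infinite for some $t$ causes no difficulty.
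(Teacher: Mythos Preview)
Your proof is correct and follows essentially the same route as the paper: reduce via a partition of unity to local charts, discard the critical set where the Jacobian vanishes, and identify the remaining Jacobian-weighted integral with $\int |\phi(t)|\,N_\pi(t)\,dt$. The only cosmetic difference is that you invoke the area formula as a named theorem, whereas the paper proves the needed instance by hand---using the inverse function theorem to decompose the set $\{\det D\Pi\neq 0\}$ into countably many pieces on which $\Pi$ is one-to-one and then summing via monotone convergence.
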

\begin{proof}
Using the standard device of a partition of unity, it suffices to establish the inequality
\[ \left| \int_U \phi(\pi_1,\ldots,\pi_d) \psi(x) \left| \det \frac{\partial \pi}{\partial x} \right| dx \right| \leq \int_{\R^d} | \phi(t)|  \left( \sum_{x \in U \ : \ \pi(x) = t} |\psi(x)| \right) dt \]
where $U$ is a small open neighborhood in $\R^d$.  If the mapping $\pi$ were $1-1$ on $U$, then this equality would follow immediately from the change-of-variables formula.  Now the portion of $U$ for which $\det \frac{\partial \pi}{\partial x} \neq 0$ is an open set, hence by the implicit function theorem, this set may be decomposed into at most countably many pieces on which $\pi$ is $1-1$.  The portion of $U$ for which $\det \frac{\partial \pi}{\partial x} = 0$ contributes nothing to the integral since the integrand vanishes.  Thus, summing over the regions of $U$ on which $\pi$ is $1-1$ and using monotone convergence gives the desired inequality, since the sum of $|\psi(x)|$ over the points $x$ with $\pi(x) = t$ is trivially bounded by $||\psi||_\infty N_\pi(t)$.
\end{proof}

The key to understanding \eqref{multilin} in this context will be to take the functions $f_j$ to be characteristic functions and apply an iterative technique similar to the method of refinements introduced by Christ \cite{christ1998}.  In particular, whenever a Jacobian determinant appears, the sets under consideration will be decomposed into pieces on which the determinant is either above or below some threshold in magnitude.  The piece on which the determinant is large will be estimated using the proposition just established.  The piece on which it is small will be further decomposed based on the size of additional Jacobian determinants which arise.

To carry out this process, suppose $D$ is a closed subset of ${\cal M}$, and let $E := \set{ x \in D}{ \pi_i(x) \in E_i \ i=1,\ldots,m}$ for some measurable sets $E_1,\ldots,E_m \subset \R$. Fix $G := (i_1,\ldots,i_d)$ for some indices $i_1,\ldots,i_d \in \{1,\ldots,m\}$.  By definition of $\partial^G \pi$, $\left( \partial^G \pi \right) \omega = d \pi_{i_1} \wedge \cdots \wedge d \pi_{i_d}$.
Hence, by proposition \ref{cov},
\[  \int_{D} \chi_E \left| \partial^G \pi \right| \omega \leq \int_{\R^d} N_{(\pi_{i_1},\ldots,\pi_{i_d})}(t) \prod_{j=1}^d \chi_{E_{i_j}}(t_j) dt. \]
Assuming that $N_{(\pi_{i_1},\ldots,\pi_{i_d})}(t)$ is bounded by above by $N$, it follows that
\[   \int_{D} \chi_E \left| \partial^G \pi \right| \omega \leq N \prod_{j=1}^d |E_{i_j}|. \]
Now fix any $\delta > 0$ and let $E' := E \cap \set{x \in D}{ |\partial^{G} \pi(x)| \leq \frac{1}{2} \delta^{-1} \prod_{i=1}^m |E_i|^{G^{(i)}}}$.  On $E \setminus E'$, one has
\[ \int_{E \setminus E'} \omega \leq \int_{E \setminus E'} 2 \delta \prod_{j=1}^d |E_{i_j}|^{-1} |\partial^G \pi| \omega \leq 2 N \delta. \]
Thus it follows that
\[ \int_{\cal M} \chi_E \omega \leq 2 N \delta + \int_{\cal M} \chi_{E'} \omega. \]
Now $E'$ has the same form as $E$ with an additional constraint: the function $\partial^G \pi$ is constrained to take values in an interval centered at the origin of width $\delta^{-1} \prod_{j=1}^m |E_j|^{G^{(j)}}$.  Thus, by induction, one may restrict $E'$ further by applying the same procedure just followed again for any $d$-tuple of functions out of $\pi_1,\ldots,\pi_m$ and $\partial^G \pi$ for the particular $G \in {\cal G}_1$ previously chosen (provided, of course, that the multiplicity assumption is still satisfied).  Clearly any $d$-tree may be obtained in this manner after boundedly many steps (depending on the minimal $K$ for which it belongs to ${\cal G}_K$).  Thus, it follows that up to an error of size  $C_{K,m,d} N \delta$ in computing the $\omega$-measure of $E$, one may assume that $|\partial^G \pi| \leq \delta^{-\#G} \prod_{j=1}^m |E_j|^{G^{(j)}}$ for all $G \in {\cal G}_K^N$.  In other words, the following proposition has been established:
\begin{proposition}
There exists a constant $C$ depending only on $K, N, m,$ and $d$ such that, for any measurable sets $E_1,\ldots,E_m \subset \R$, any closed $D \subset {\cal M}$, and any $\delta > 0$, the set $E := \set{ x \in {D}}{ \pi_i(x) \in E_i, \ i =1,\ldots,m}$
 satisfies the inequality
\begin{equation}
\left| E \right|_\omega \leq C \delta + \left| E \cap \bigcap_{G \in {\cal G}_K^N ({\cal M},\pi)} \set{ x \in {\cal M}}{|\partial^G \pi(x)| \leq \delta^{-\#G} \prod_{j=1}^m |E_j|^{G^{(j)}}} \right|_\omega \label{mainprop}
\end{equation}
(where $|\cdot|_\omega$ indicates the $\omega$-measure of the corresponding set).
\end{proposition}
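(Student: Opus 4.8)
The plan is to specialize to $f_j=\chi_{E_j}$ and run an iterative refinement in the spirit of Christ's method of refinements \cite{christ1998}, using Proposition \ref{cov} as the sole analytic ingredient. The organizing observation, already visible in the discussion preceding the statement, is that a set $E=\set{x\in D}{\pi_i(x)\in E_i,\ i=1,\ldots,m}$ becomes, after one refinement step, a set of precisely the same shape but with one extra membership constraint: some $\partial^G\pi$ is forced into a controlled interval about $0$. Hence the step iterates, and after boundedly many rounds every $G\in{\cal G}_K^N({\cal M},\pi)$ will have been brought into play.

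For the single step, fix a nontrivial $d$-tree $G=(i_1,\ldots,i_d)$ with $i_1,\ldots,i_d\in\{1,\ldots,m\}$; if two of the indices coincide then $d\pi_{i_1}\wedge\cdots\wedge d\pi_{i_d}=0$, so $\partial^G\pi\equiv 0$ and there is nothing to prove, so assume them distinct, and assume the $d$-tuple $(\pi_{i_1},\ldots,\pi_{i_d})$ has at most $N$ nondegenerate preimages of every point of $\R^d$. Since $(\partial^G\pi)\,\omega=d\pi_{i_1}\wedge\cdots\wedge d\pi_{i_d}$, one may write $\chi_E|\partial^G\pi|\,\omega=\phi(\pi_{i_1},\ldots,\pi_{i_d})\,\psi\, d\pi_{i_1}\wedge\cdots\wedge d\pi_{i_d}$ with $\phi(t)=\prod_{j=1}^d\chi_{E_{i_j}}(t_j)$ and $\psi$ the bounded product of $\mathrm{sgn}(\partial^G\pi)$ with $\chi_D$ and the remaining indicators $\chi_{E_i}(\pi_i)$, $i\notin\{i_j\}$; Proposition \ref{cov} then yields
\[ \int_D \chi_E |\partial^G\pi|\,\omega \le \int_{\R^d} N_{(\pi_{i_1},\ldots,\pi_{i_d})}(t)\,\phi(t)\,dt \le N\prod_{i=1}^m |E_i|^{G^{(i)}}, \]
since $G^{(i)}$ is exactly the number of $j$ with $i_j=i$. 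For $\delta>0$, splitting $E$ at the level $\tfrac12\delta^{-1}\prod_i|E_i|^{G^{(i)}}$ for $|\partial^G\pi|$ and bounding the part above the level by a Chebyshev estimate against the displayed inequality gives $|E\setminus E'|_\omega\le 2N\delta$, hence $|E|_\omega\le 2N\delta+|E'|_\omega$, where $E'$ carries the new constraint $|\partial^G\pi|\le\delta^{-1}\prod_i|E_i|^{G^{(i)}}$ --- the asserted inequality in the case $\#G=1$.

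The induction is on the smallest $K$ with $G\in{\cal G}_K$. Treating each already-constrained function $\partial^{G'}\pi$ as a further component function, one applies the single step to a tree $G=(G_1,\ldots,G_d)\in{\cal G}_k^N({\cal M},\pi)$ whose children $G_1,\ldots,G_d$ (which lie in ${\cal G}_{k-1}^N$ or are trivial, so have already been processed) have been confined to intervals of width $\delta^{-\#G_\ell}\prod_j|E_j|^{G_\ell^{(j)}}$ up to harmless factors of $2$; the weak multiplicity $N$ condition that membership in ${\cal G}_k^N$ builds in guarantees the $d$-tuple $(\partial^{G_1}\pi,\ldots,\partial^{G_d}\pi)$ has multiplicity at most $N$, so the step bounds $\int\chi_E|\partial^G\pi|\,\omega$ by $N$ times the product of these widths, which is a constant multiple of $N\delta^{-(\#G-1)}\prod_j|E_j|^{G^{(j)}}$ because $\#G=1+\sum_\ell\#G_\ell$ and $G^{(j)}=\sum_\ell G_\ell^{(j)}$ (these last two identities, together with the fact that a trivial child $G_\ell=i$ contributes width $|E_i|$ with $\#G_\ell=0$, make the numerology match uniformly). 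A Chebyshev estimate then installs $|\partial^G\pi|\le\delta^{-\#G}\prod_j|E_j|^{G^{(j)}}$ at the cost of a further $O(N\delta)$. As ${\cal G}_K$ is finite with cardinality depending only on $K,m,d$, summing these errors over all $G\in{\cal G}_K^N({\cal M},\pi)$ produces \eqref{mainprop} with $C=C(K,N,m,d)$.

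The main obstacle is purely organizational: one must process the trees in an order respecting the partial order by depth --- e.g.\ sweeping through ${\cal G}_0$, ${\cal G}_1\setminus{\cal G}_0$, up to ${\cal G}_K\setminus{\cal G}_{K-1}$ --- so that every tree is handled only after all of its immediate subtrees, and one must check that the numerical constants ($\tfrac12$ in the threshold, $2$ in Chebyshev, $2^d$ from the child intervals) compose across the levels so that the \emph{final} constraint is exactly $|\partial^G\pi|\le\delta^{-\#G}\prod_j|E_j|^{G^{(j)}}$ with no level-dependent factor on the exponent; carrying the explicit constants as above makes this automatic, since the residual numerical factors are absorbed into $C$ and $\delta$ is arbitrary. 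Everything else is routine: Proposition \ref{cov} is invoked only with the constant weight (up to sign and bounded leftover indicators), so no integrability beyond $L^\infty$ is needed and the new component functions $\partial^{G_\ell}\pi$ are manifestly $C^1$; the role of real-analyticity (via Gabrielov's theorem) or of the Pfaffian hypotheses (via Khovanski{\u\i}'s theorem) is only to guarantee that such a finite $N$ exists, which is taken as given here.
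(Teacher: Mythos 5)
Your argument is correct and is essentially the same as the paper's: a single refinement step built on Proposition \ref{cov} plus Chebyshev to install the constraint $|\partial^G\pi|\le\tfrac12\delta^{-1}\prod_i|E_i|^{G^{(i)}}$ at cost $O(N\delta)$, followed by induction on tree depth, treating already-constrained $\partial^{G'}\pi$'s as new component functions so that the constraint for a general $G=(G_1,\ldots,G_d)$ becomes $|\partial^G\pi|\le\delta^{-\#G}\prod_j|E_j|^{G^{(j)}}$ via the identities $\#G=1+\sum_\ell\#G_\ell$ and $G^{(j)}=\sum_\ell G_\ell^{(j)}$. You are a bit more explicit than the paper about the ordering of the induction and the composition of numerical constants across levels, but the decomposition, the use of Proposition \ref{cov}, and the role of the strong multiplicity hypothesis are the same.
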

Theorem \ref{maintheorem} follows immediately from this proposition by setting $f_j = \chi_{E_j}$ for $j=1,\ldots,m$ and taking
\[ \delta := \left( \left(\inf_{x \in D} |\partial^G \pi(x)| \right)^{-1} \prod_{j=1}^m |E_j|^{G^{(j)}} \right)^{\frac{1}{\#G}}, \]
since, for this value of $\delta$, the set on the right-hand side of \eqref{mainprop} is empty.


\section{Proof of theorem \ref{oscthm}}
\label{oscsec}

The boundary of ${\cal M}$ played no major role in the proof of theorem \ref{maintheorem};  theorem \ref{oscthm}, on the other hand, is sensitive to the presence of a boundary; if ${\partial {\cal M}}$ is nonempty, it will be necessary to control the topology of various sublevel sets restricted to $\partial M$ as well.  To that end, some additional definitions concerning solutions of systems of equations on ${\cal M}$ are necessary:

\begin{definition}
Any $d$-tree $G$ will be said to satisfy the weak multiplicity $N$ condition on $ \partial {\cal M}$ when either $G$ is trivial or $G = (G_1,\ldots,G_d)$ and the system $\partial^{G_{\sigma_1}} \pi = c_1,\ldots, \partial^{G_{\sigma_{d-1}}} \pi = c_{d-1}$ has at most $N$ nondegenerate solutions on $\partial {\cal M}$ for any choice of $\sigma_1,\ldots,\sigma_{d-1}$ in $\{1,\ldots,d\}$. 
\end{definition}

\begin{definition}
Any $d$-tree $G$ will be said to satisfy the strong multiplicity $N$ condition on $\partial {\cal M}$ when either $G$ is trivial or $G$ satisfies the weak multiplicity $N$ condition on $ \partial {\cal M}$ and $G = (G_1,\ldots,G_d)$ for $G_1,\ldots,G_d$ also satisfying the strong multiplicity $N$ condition on $\partial {\cal M}$.  The collection of nontrivial $G \in {\cal G}_K$ which satisfy the strong multiplicity $N$ condition on ${\partial M}$ will be denoted ${\cal G}_K^N(\partial {\cal M},\pi)$.
\end{definition}
As in the previous section, analyticity and compactness imply that for any $K \neq 0$, $\bigcup_N {\cal G}_K^N(\partial {\cal M},\pi) = {\cal G}_K \setminus {\cal G}_0$, but the particular $N$ at which the collections stabilize can and generally does depend in some complex way on the functions $\pi_1,\ldots,\pi_m$.  In the case that ${\cal M}$ is a semi-Pfaffian set in $\R^d$, the finiteness theorem of Khovanski{\u\i} will again guarantee that this $N$ only depends on the formats of the functions in question as well as the complexity of the boundary of ${\cal M}$.  To be precise:
\begin{definition}
A semi-Pfaffian set is any set which may be written in the form
\[ D = \bigcup_{i=1}^N \set{ x \in U \subset \R^d}{f_{i1}(x) = \cdots = f_{iN}(x) = 0, g_{i1}(x) > 0, \ldots, g_{iN}(x) > 0} \]
where $f_{ij}$ and $g_{ij}$ are Pfaffian functions on $U$ with a common chain for $i,j = 1,\ldots,N$.  The format of $D$ is defined to be the collection of numbers including $N$, $d$, and the formats of $f_{ij}$ and $g_{ij}$ for each $i,j=1,\ldots,N$.
\end{definition}
Up to a set of dimension $d-1$, any such semi-Pfaffian (or, more generally, semi-analytic) set may be written as a union of sets of the form
\[ D_l := \set{ x \in {\cal M}}{\pi_{m+1,l}(x) \geq 0,\ldots,\pi_{M,l}(x) \geq 0} \]
for some functions $\pi_{j,l}$ where $j = m+1,\ldots,M$ and $l = 1,\ldots,M$, and these sets $D_l$ may be assumed to be non-overlapping (meaning that their intersection is at most $(d-1)$-dimensional). Now fix positive numbers $\epsilon_1,\ldots,\epsilon_{m-1}$, and let 
 \begin{align*}
D_\epsilon := & \set{ x \in {\cal M}}{ |\pi_j(x) | \leq \epsilon_j \ j=1,\ldots,m-1} \\
 & \hphantom{hhh} \cap  \set{x \in {\cal M}}{ \pi_j(x) \geq 0 \ j=m+1,\ldots,M}.
\end{align*}
If $D$ and $E_\epsilon$ are as defined in the statement of theorem \ref{oscthm}, it is clear that $D \cap E_\epsilon$ may be written as a non-overlapping union of sets of the form $D_\epsilon$ and that the number of such sets is controlled by the formats of $D$ and $\pi_1,\ldots,\pi_m$ in the semi-Pfaffian case.  To prove theorem \ref{oscthm}, then, it suffices to establish the following proposition:
\begin{proposition}
Given $(\pi_1,\ldots,\pi_M)$, fix a positive integer $K$ and let $N$ be such that ${\cal G}_{K+1}^N ({\cal M},\pi) \cap {\cal G}_{K+1}^N(\partial {\cal M},\pi) = {\cal G}_{K+1} \setminus {\cal G}_0$ for the $d$-trees on $M$ indices. Then for any nontrivial $G$ which is a $d$-tree on $m$ indices in ${\cal G}_K$ and all real $\lambda$,
\begin{equation}
 \left| \int_{D_\epsilon} e^{i \lambda \pi_m} \omega \right|  \leq C \left( \inf_{x \in D_\epsilon} |\partial^G \pi(x)| \right)^{-\frac{1}{\#G}} |\lambda|^{-\frac{G^{(m)}}{\#G}} \prod_{j =1}^{m-1} \epsilon_j^{\frac{G^{(j)}}{\# G}} \label{oscinq}
\end{equation}
for some constant $C$ that depends only on $N$, $K$, $M$, and $d$.
\end{proposition}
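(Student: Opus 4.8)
The plan is to collapse the estimate \eqref{oscinq} onto the single real variable $s=\pi_m$, so that the decay in $\lambda$ becomes a purely one‑dimensional phenomenon produced by an integration by parts; the geometry of ${\cal M}$ enters only through the uniform sublevel bound already supplied by Theorem \ref{maintheorem} and through a uniform bound on the number of intervals of monotonicity of the density of $\omega$ along the fibres of $\pi_m$. Concretely, assuming $\pi_m$ nonconstant (otherwise \eqref{oscinq} is immediate), so that $d\pi_m\ne 0$ off a null set, I would run the change‑of‑variables argument from the proof of Proposition \ref{cov} ``along the fibres of $\pi_m$'' — keeping the factor $e^{i\lambda s}$ and integrating out the remaining $d-1$ coordinates — to obtain
\[ \int_{D_\epsilon} e^{i\lambda\pi_m}\,\omega \;=\; \int_{\R} e^{i\lambda s}\,g(s)\,ds,\qquad g(s):=\int_{\pi_m^{-1}(s)\cap D_\epsilon}\beta_s, \]
where $\beta_s$ is the $(d-1)$‑form induced on the fibre by $d\pi_m\wedge\beta_s=\omega$. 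Since $\omega$ is a nonnegative $d$‑form, $g\ge 0$, and $g$ is compactly supported and integrable because ${\cal M}$ is compact.

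Next I would extract the sublevel information: applying Theorem \ref{maintheorem} with $D$ replaced by $\{x\in{\cal M}:\pi_j(x)\ge 0,\ j=m+1,\dots,M\}$, with $f_j=\chi_{[-\epsilon_j,\epsilon_j]}$ for $j<m$ and $f_m=\chi_I$ for an arbitrary interval $I$, and with $\delta$ chosen as in the deduction of Theorem \ref{maintheorem} from \eqref{mainprop} (so that the relevant infimum of $|\partial^G\pi|$ is over a subset of $D_\epsilon$, hence dominated by $\inf_{D_\epsilon}|\partial^G\pi|$), one gets, for \emph{every} interval $I$,
\[ \int_I g(s)\,ds=\bigl|D_\epsilon\cap\{\pi_m\in I\}\bigr|_\omega\;\le\; B\,|I|^{a},\qquad a:=\frac{G^{(m)}}{\#G},\quad B:=C\Bigl(\inf_{D_\epsilon}|\partial^G\pi|\Bigr)^{-1/\#G}\prod_{j=1}^{m-1}\epsilon_j^{G^{(j)}/\#G}, \]
with $C=C(N,K,M,d)$; this step uses only that $G$ satisfies the strong multiplicity $N$ condition on ${\cal M}$.

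The heart of the matter — and the step I expect to be the main obstacle — is the claim that $g$ is piecewise monotone with the number of monotone pieces bounded by some $P=P(N,K,M,d)$, uniformly in $\epsilon$ and $\lambda$. The monotonicity of $g$ can only change at a critical value of $\pi_m$ on one of the strata of $D_\epsilon$ (i.e. on ${\cal M}$, on $\partial{\cal M}$, on a face $\{\pi_j=\pm\epsilon_j\}$ with $j<m$ or $\{\pi_j=0\}$ with $j>m$, or on an intersection of such faces) or, between consecutive such critical values, at an interior zero of $g'$; differentiating the fibre integral once and applying Stokes' theorem expresses such a zero as a nondegenerate solution of a Jacobian‑type system built from $d\pi_m$, the defining functions of the faces, and one further differentiation — that is, as a nondegenerate zero of $\partial^{G'}\pi$ for a suitable $d$‑tree $G'$ on $M$ indices lying in ${\cal G}_{K+1}\setminus{\cal G}_0$. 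The hypothesis ${\cal G}_{K+1}^N({\cal M},\pi)\cap{\cal G}_{K+1}^N(\partial{\cal M},\pi)={\cal G}_{K+1}\setminus{\cal G}_0$, combined with Gabrielov's finiteness theorem (analytic case) or Khovanski{\u\i}'s theorem (Pfaffian case), then bounds all these solution counts, hence $P$, by a constant of the asserted form. This is precisely where the boundary $\partial{\cal M}$ — irrelevant to Theorem \ref{maintheorem} — must be controlled; carrying this reduction out carefully, so that finitely many systems account for \emph{every} change of monotonicity of $g$ \emph{uniformly} in $\epsilon$ (very possibly by re‑running the localisation procedure of Section \ref{proofsec} with the oscillatory factor and Stokes' theorem in place), is the real work.

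Granting the monotonicity bound, \eqref{oscinq} follows from an elementary one‑dimensional estimate: if $g\ge 0$ is compactly supported, $\int_I g\le B|I|^a$ for every interval $I$, and $g$ is monotone on each of at most $P$ intervals, then $\bigl|\int_\R e^{i\lambda s}g(s)\,ds\bigr|\le 3PB|\lambda|^{-a}$. On a monotonicity interval $[c,d]$: if $d-c\le|\lambda|^{-1}$ its contribution is at most $\int_c^d g\le B|\lambda|^{-a}$; otherwise, place $s_\ast$ at distance $|\lambda|^{-1}$ from the endpoint of $[c,d]$ nearer to where $g$ is largest, bound the short sub‑interval's contribution by $\int g\le B|\lambda|^{-a}$, and integrate by parts on the complementary sub‑interval, on which $g$ is monotone and bounded by $g(s_\ast)$. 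The boundary and total‑variation terms of this integration by parts are at most $2|\lambda|^{-1}g(s_\ast)$, and $g(s_\ast)\le B|\lambda|^{1-a}$ is exactly $\int_I g\le B|I|^a$ applied to the length‑$|\lambda|^{-1}$ interval joining $s_\ast$ to the large endpoint of $[c,d]$; summing over the $\le P$ intervals gives $3PB|\lambda|^{-a}$, which with the above $a$ and $B$ is \eqref{oscinq}. (The reduction of a general semi‑analytic or semi‑Pfaffian $D$, together with $E_\epsilon$, to boundedly many sets $D_\epsilon$ of the stated form has already been carried out preceding the Proposition.)
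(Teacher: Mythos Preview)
Your approach is genuinely different from the paper's, and the gap you flag is real and not obviously closable along the lines you sketch.

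The paper does \emph{not} collapse to a one-variable integral in $s=\pi_m$. Instead it works on all of ${\cal M}$ and constructs an explicit $(d-1)$-form
\[
e^{i\lambda\pi_m}\Bigl(\prod_{j=1}^M\eta_j\circ\pi_j\Bigr)\sum_{\sigma}\psi_\sigma(\partial^{(m,\cdot)}\pi)\,d\pi_{\sigma_1}\wedge\cdots\wedge d\pi_{\sigma_{d-1}},
\]
where $\sigma$ ranges over $(d-1)$-element subsets of $\{1,\dots,m-1\}$ and $\psi_\sigma$ is a damping factor built so that $\sum_\sigma s_\sigma\psi_\sigma(s)=1-\prod_\sigma\eta_0(\delta_\sigma s_\sigma)$. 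Stokes' theorem is then applied on ${\cal M}$: the exterior derivative produces $i\lambda$ times the original integrand modified by the cutoff $\prod_\sigma\eta_0(\delta_\sigma\partial^{(m,\sigma)}\pi)$, plus error terms which, together with the boundary integral on $\partial{\cal M}$, are bounded by $CN\delta$ using Proposition~\ref{covprop}. The hypothesis on ${\cal G}_{K+1}^N(\partial{\cal M},\pi)$ is used precisely here, to bound the boundary integral via the change of variables $(\pi_{\sigma_1},\dots,\pi_{\sigma_{d-1}})$ on $\partial{\cal M}$. One is then reduced to an oscillatory integral over the region where all $|\partial^{(m,\sigma)}\pi|$ are small, and the iteration of Section~\ref{proofsec} finishes the argument. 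No fibre-volume function is ever analyzed.

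Your monotonicity step is where the difficulty concentrates, and the justification you give does not go through as stated. Between critical values, $g'(s)$ is an integral over the $(d-2)$-dimensional boundary of the fibre $\pi_m^{-1}(s)\cap D_\epsilon$ (after Stokes), so the condition $g'(s)=0$ is an \emph{integral} relation among values at many boundary points, not a pointwise Jacobian condition at a single point of ${\cal M}$ or $\partial{\cal M}$. Already for $d=2$ with a slice $[a(s),b(s)]$, $g'(s)=0$ says $a'(s)=b'(s)$, which compares derivatives at two distinct points; when the slice has several components it is a sum of such comparisons. This is not captured by counting nondegenerate solutions of systems $\partial^{G'_1}\pi=c_1,\dots,\partial^{G'_d}\pi=c_d$ on ${\cal M}$ or on $\partial{\cal M}$, which is what the hypothesis ${\cal G}_{K+1}^N({\cal M},\pi)\cap{\cal G}_{K+1}^N(\partial{\cal M},\pi)={\cal G}_{K+1}\setminus{\cal G}_0$ controls. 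One can bound the number of monotonicity intervals of $g$ in the analytic or Pfaffian category by appealing to finiteness of zeros of the one-variable Pfaffian function $g'$, but tying that bound to the integer $N$ in the stated hypothesis, uniformly in $\epsilon$, is additional work you have not supplied. Your parenthetical ``very possibly by re-running the localisation procedure \dots with the oscillatory factor and Stokes' theorem in place'' is exactly what the paper does, and once one goes that route the reduction to a one-variable density is superfluous.
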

\begin{proof}
 To simplify matters somewhat, consider the integral
\begin{equation}  \int \left( e^{i \lambda \pi_m} \prod_{j=1}^{M} \eta_j \circ \pi_j  \right) \omega \label{oscint1}
\end{equation}
where $|\eta_j(t)| \leq 1$ and $\int |\eta'_j(t)| dt \leq 2$ for all $j$ and  $\int |\eta_j(t)|  dt \leq 2 \epsilon_j$
when $j < m$.  This proposition will establish uniform bounds for \eqref{oscint1} over the entire class of $\eta_j$'s satisfying the stated inequalities.  By a standard limiting argument, the inequality \eqref{oscinq} will follow.

Let $s$ be an element of $\R^S$ for some (presumably large) $S$, and let the coordinates of $s$ be indexed by the variable $\sigma$ ($\sigma$ will belong to some index set of cardinality $S$ to be specified momentarily).  Suppose $\delta \in \R^S$ has nonnegative entries.  For each $\sigma$, consider the function given by
\[ \psi_\sigma(s) := \frac{\delta_\sigma^2 s_\sigma}{\sum_{\sigma'} \delta_{\sigma'}^2 s_{\sigma'}^2} \left( 1 - \prod_{\sigma'} \eta_0(\delta_{\sigma'} s_{\sigma'})  \right),\]
where $\eta_0$ is supported on $[-1,1]$, identically one on $[-\frac{1}{2},\frac{1}{2}]$, bounded in magnitude by $1$ and has a continuous derivative which satisfies $\int |\eta_0'(t)| dt \leq 2$.
Since $\psi_{\sigma}(s) = 0$ whenever $|\delta_\sigma s_\sigma| \leq \frac{1}{2}$ for each $\sigma$, it follows that there is a constant $C$ independent of $\delta$ and $s$ such that, for any indices $\sigma,\sigma'$
\begin{align} 
|\psi_\sigma(s)|  & \leq C \delta_\sigma, \label{decay1} \\
 |\partial_{\sigma'} \psi_{\sigma}(s)| & \leq \frac{C \delta_{\sigma} \delta_{\sigma'}}{1 + \delta_{\sigma'}^2 s_{\sigma'}^2}. \label{decay2}
\end{align}
Given these estimates, consider now the special case when $\sigma$ is is contained in the index set given by all possible subsets of $\{1,\ldots,m-1\}$ which have cardinality $d-1$ (that is, $\sigma$ will be one particular subset of $\{1,\ldots,m-1\}$).  For any such $\sigma$, let $\sigma_1, \sigma_2,\ldots,\sigma_{d-1}$ be the elements of $\sigma$ arranged in increasing order.  Now consider the $(d-1)$-form on ${\cal M}$ given by
\begin{equation}
 e^{i \lambda \pi_m} \prod_{j=1}^M \eta_j \circ \pi_j \sum_{\sigma} \psi_{\sigma} ( \partial^{(m,\cdot)} \pi ) d\pi_{\sigma_1} \wedge \cdots \wedge d \pi_{\sigma_{d-1}}, \label{boundaryform}
\end{equation}
where $\partial^{(m,\cdot)} \pi$ represents the vector whose $\sigma'$-th coordinate is $\partial^{(m,\sigma')} \pi$, i.e., $\partial^{(m,\sigma'_1,\ldots,\sigma'_{d-1})} \pi$.
By proposition \ref{covprop}, if the number of nondegenerate solutions on $\partial {\cal M}$ of the system \[ (\pi_{\sigma_1}(x),\ldots,\pi_{\sigma_{d-1}}(x)) = (c_1,\ldots,c_{d-1})\] is bounded by $N$, then the integral of \eqref{boundaryform} on $\partial {\cal M}$ will be bounded by a dimensional constant times $N \sum_{\sigma} \epsilon_{\sigma_1} \cdots \epsilon_{\sigma_{d-1}} \delta_{\sigma}$ (the $\epsilon$'s come from the integrals of $|\eta_j|$ for $j < m$ and the $\delta_\sigma$ comes from \eqref{decay1}).

This exterior derivative of \eqref{boundaryform}, on the other hand, is easily calculated; the result is a $d$-form equal to $e^{i \lambda \pi_m } (\prod_{j=1}^M  \eta_j \circ \pi_j ) (\Psi_1 + \Psi_2 + \Psi_3) \omega$, where $\omega$ is the $d$-form attached to ${\cal M}$ and $\Psi_1,\ldots,\Psi_3$ are the functions given by
\begin{align*}
\Psi_1(x) & := i \lambda \sum_\sigma \partial^{(m,\sigma)} \pi (x) \psi_{\sigma} ( \partial^{(m,\cdot)} \pi(x)) \\
\Psi_2(x) & := \sum_{j=1}^M \sum_{\sigma} \frac{\eta_j' \circ \pi_j(x)}{\eta_j \circ \pi_j(x)} \partial^{(j,\sigma)} \pi(x) \psi_{\sigma} ( \partial^{(m,\cdot)} \pi(x)) \\
\Psi_3(x) & := \sum_{\sigma,\sigma'} \partial^{((m,\sigma'),\sigma)} \pi(x) (\partial_{\sigma'} \psi_\sigma)(\partial^{(m,\cdot)} \pi(x))
\end{align*}
The first function, $\Psi_1$, simply equals $i \lambda (1 - \prod_\sigma \eta_0(\delta_\sigma \partial^{(m,\sigma)} \pi(x) ))$.  Each term in the sum defining $\Psi_2$ may be estimated by means of proposition \ref{covprop} since the derivative $\partial^{(j,\sigma)} \pi$ corresponds to the Jacobian determinant of the transformation $x \mapsto (\pi_j(x),\pi_{\sigma_1}(x),\ldots,\pi_{\sigma_{d-1}}(x))$.  Estimating the $L^\infty$ norm of $\psi_\sigma$ with the upper bound \eqref{decay1} gives
\begin{align*} 
\left| \int e^{i \lambda \pi_m} \left(\prod_{k=1}^M \eta_k \circ \pi_k \right) \right. & \left. \partial^{(j,\sigma)} \pi \frac{\eta_j' \circ \pi_j}{\eta_j \circ \pi_j}   \psi_\sigma(\partial^{(m,\cdot)} \pi ) \omega \right|   \\
& \leq C N \delta_\sigma \int |\eta_j'(t)| dt \prod_{k=1}^{d-1} \int |\eta_{\sigma_k}(t)| dt \\
& \leq C' N \epsilon_{\sigma_1} \cdots \epsilon_{\sigma_{d-1}} \delta_\sigma
\end{align*}
where $C'$ is some dimensional constant and $N$ is a bound on the number of nondegenerate solutions of the system $(\pi_j(x),\pi_{\sigma_1}(x),\ldots,\pi_{\sigma_{d-1}}(x)) = (c_1,\ldots,c_d)$. The same procedure holds for all terms in the third sum (defining $\Psi_3$); this time, the change-of-variables involves the function $\partial^{(m,\sigma')} \pi $ and $\pi_{\sigma_1},\ldots,\pi_{\sigma_{d-1}}$.  In this case, the estimate \eqref{decay2} gives that 
\begin{align*}
\left| \int e^{i \lambda \pi_m} \left( \prod_{k=1}^M \eta_k \circ \pi_k \right)  \right. & \left. \partial^{((m,\sigma'),\sigma)} \pi  (\partial_{\sigma'} \psi_{\sigma})(\partial^{(m,\cdot)} \pi) \omega \right|  \\
& \leq C N \int \frac{\delta_{\sigma} \delta_{\sigma'}}{1 + \delta_{\sigma'} t^2} dt \prod_{k=1}^{d-1} \int |\eta_{\sigma_k}(t)| dt \\
& \leq C' N \epsilon_{\sigma_1} \cdots \epsilon_{\sigma_{d-1}} \delta_\sigma.
\end{align*}
Now fix a single constant $\delta$, and for each $\sigma$, let $\delta_\sigma := \lambda \delta (\epsilon_{\sigma_1} \cdots \epsilon_{\sigma_{d-1}})^{-1}$.  Finally, Stokes' theorem may be applied to relate the integral of \eqref{boundaryform} on $\partial {\cal M}$ to the integral of its exterior derivative on ${\cal M}$.  This equality, combined with the equality for $\Psi_1$ and the estimates for $\Psi_2$ and $\Psi_3$, gives that
\begin{align*}
 & \left| \int_{\cal M} e^{i \lambda \pi_m} \left( \prod_{j=1}^M \eta_j \circ \pi_j \right) \prod_{\sigma} \eta_0(\lambda \delta \epsilon_{\sigma_1}^{-1} \cdots \epsilon_{\sigma_d}^{-1} \partial^{(m,\sigma)}\pi) \omega  \right. \\
& \qquad - \left. \int_{\cal M} e^{i \lambda \pi_m} \left( \prod_{j=1}^M \eta_j \circ \pi_j \right) \omega \right| \leq {C N \delta}. 
\end{align*}
Thus, at the cost of an error term of size $C N \delta$, one may add a smooth cutoff to \eqref{oscint1} restricting the integral to the region where $|\partial^{(m,\sigma)} \pi| \leq \delta^{-1} \lambda^{-1} \prod_{j=1}^{m-1} \epsilon_{\sigma_j}$ is small whenever $\sigma$ is a cardinality $d-1$ subset of $\{1,\ldots,m-1\}$.  Moreover, at the same cost, the domain of integration may be restricted to the set where the Jacobian determinant of any $d$-tuple of these functions is small (for the same reason as in the proof of theorem \ref{maintheorem}, namely, the support of the integral must be small when the Jacobian is large).  Just as in \ref{maintheorem}, a trivial induction on ${\cal M}$ finishes the proposition.
\end{proof}
\section{Final remarks}








\begin{enumerate}
\item Unlike Carbery, Christ, and Wright, the function $F$ in theorem \ref{ccwthm1} is real analytic or Pfaffian, but need not satisfy $L F(x) \geq 1$ on the whole unit box for the theorem to be valid.  It is easy to show that hypotheses closer to the original (namely, smooth $F$ which satisfy a derivative condition everywhere on the unit box) can fail to imply results in this more general situation.  The most straightforward example comes from the family of functions $F_N(x,y) = N^{-1} e^{x} \sin Ny$ on $[0,1]^2$.  As $N \rightarrow \infty$, the determinant of the Hessian of $F_N$ is uniformly bounded away from $0$ on $[0,1]^2$, but clearly no nontrivial uniform sublevel set estimates hold for this family. 
\item The gain in decay in theorem \ref{ccwthm1} is consistent with the limitations observed by Phong, Stein, and Sturm since $L (p \circ \ell) \equiv 0$ for any $L \in {\cal L}^{a,b}$ with $a \geq 2$ when $p$ is a function of one variable and $\ell$ is linear.  The reason for this is that the gradients of any $\partial^\beta (p \circ \ell)$ all point in the same direction.  Moreover, if $F$ is not of this form, then it will not be the case that $L F \equiv 0$ for all $L \in {\cal L}^{a,b}$ with $a \geq 2$.  This is reminiscent of work by Robert and Sargos \cite{rs2005}, in which they showed that, when $F$ is a polynomial of degree $k$ in $x_1,\ldots,x_d$, the decay rate of the oscillatory integral
\[ \int \varphi(x) e^{i \lambda F(x)} dx \]
(where $\varphi$ continuously differentiable and compactly supported) is at least $\lambda^{-1/(k-1)}$ unless $F$ happens to be of the form $F(x) = c_0 + c_1 (\ell(x))^k$ for some linear function $\ell$.
\item Theorem \ref{pssthm2} is sharp in the sense that, for a given $L \in {\cal L}^{a,b}$, scaling dictates that the supremum of
\[ \epsilon^{-\sigma} \prod_{i=1}^d |E_i|^{-1 + \tau_i} \int_D \chi_{|F(x)| \leq \epsilon} \prod_{i=1}^d \chi_{E_i}(x_i) dx \]
over all $E_1,\ldots,E_d$ and all $D, F$ with $L F \geq 1$ on $D$, and all $\epsilon > 0$ must be infinite unless $(\sigma,\tau_1,\ldots,\tau_d) = \theta (a,b_1,\ldots,b_d)$ for some constant $\theta$.  Aside from the case $a=1$, though, the complexity of the ${\cal L}^{a,b}$ makes it difficult to determine whether $\theta = \frac{1}{|b|+1-a}$ is the largest possible in general.  The same remarks are valid for theorem \ref{maintheorem} as well.
\end{enumerate}

\section{Acknowledgments}

The author wishes to thank A. Gabrielov for information and references relating to the topology of semi-analytic sets.  This research was partially supported by  NSF grant DMS-0850791.




\bibliography{mybib}

\end{document}